\newcommand\N{\mathbb{N}}
\newcommand\Z{\mathbb{Z}}
\newcommand\C{\mathbb{C}}
\newcommand\K{\mathbb{K}}
\newcommand\Def[1]{\textbf{{#1}}}
\newcommand\Comment[1]{\textcolor{violet}{MyComment: #1}}
\newcommand\New[1]{\textcolor{red}{#1}}
\DeclareMathOperator{\GL}{GL}
\DeclareMathOperator{\Flag}{\textit{Fl}}
\DeclareMathOperator{\Gr}{Gr}
\DeclareMathOperator{\rep}{rep}
\DeclareMathOperator{\modCat}{mod}
\DeclareMathOperator{\im}{Im}
\DeclareMathOperator{\Hom}{Hom}
\DeclareMathOperator{\Ext}{Ext}
\DeclareMathOperator{\id}{id}
\DeclareMathOperator{\dimvec}{{\bf dim}}
\DeclareMathOperator{\bs}{{BS}}
\theoremstyle{plain}
\newtheorem{theorem}[equation]{Theorem}
\crefname{theorem}{\bf{Theorem}}{\bf{theorems}}
\newtheorem{lemma}[equation]{Lemma}
\crefname{lemma}{\bf{Lemma}}{\bf{lemmas}}
\newtheorem{proposition}[equation]{Proposition}
\crefname{proposition}{\bf{Proposition}}{\bf{propositions}}
\newtheorem{corollary}[equation]{Corollary}
\crefname{corollary}{\bf{Corollary}}{\bf{corollaries}}
\theoremstyle{remark}
\newtheorem{remark}[equation]{Remark}
\crefname{remark}{\textit{Remark}}{\textit{remarks}}
\theoremstyle{definition}
\newtheorem{definition}[equation]{Definition}
\crefname{definition}{\bf{Definition}}{\bf{definitions}}
\newtheorem{example}[equation]{Example}
\crefname{example}{\bf{Example}}{\bf{examples}}
\theoremstyle{plain}
\newtheorem{maintheorem}{Theorem}
\crefname{maintheorem}{\bf{Theorem}}{\bf{theorems}}
\keywords{Schubert varieties, Quiver Grassmannians, Bott-Samelson resolution}
\subjclass{16G20, 14M15, 14N20.}
\title[Quiver Grassmannians for the Bott-Samelson resolution]{Quiver Grassmannians for the Bott-Samelson resolution of type A Schubert varieties}
\numberwithin{equation}{section}
\begin{document}

\author{Giulia Iezzi}
\address{Chair of Algebra and Representation Theory, RWTH Aachen, Pontdriesch 10-14, 52062 Aachen, Germany}
\email{iezzi@art.rwth-aachen.de}

\begin{abstract}
We realise the Bott-Samelson resolutions of type A Schubert varieties as quiver Grassmannians. In order to explicitly describe this isomorphism, we introduce the notion of a \textit{geometrically compatible} decomposition for any permutation in $S_n$. For smooth type A Schubert varieties, we identify a suitable dimension vector such that the corresponding quiver Grassmannian is isomorphic to the Schubert variety. To obtain these isomorphisms, we construct a special quiver with relations and investigate two classes of quiver Grassmannians for this quiver.
\end{abstract}
{\let\newpage\relax\maketitle}

\section{Introduction}
In this paper we construct a special quiver with relations and a rigid representation for this quiver, to then consider the quiver Grassmannian that corresponds to opportune choices of a dimension vector for the quiver. We show how this quiver Grassmannian can realise the Bott-Samelson resolution for Schubert varieties and how, for a different dimension vector, it is isomorphic to a chosen smooth Schubert variety.
We first give a brief introduction about the objects of study in this work and then present our main results.

Schubert varieties first appeared at the end of the 19$^{\text{th}}$ century in the context of Schubert calculus, whose purpose is to determine the number of solutions of certain intersection problems, and have become some of the best understood examples of complex projective varieties. They have recently been linked to degenerate flag varieties and to quiver Grassmannians. Two examples of such connections are in \cite{irelli2014degenerate}, where the authors show that any type A or C degenerate flag variety is isomorphic to a Schubert variety in an appropriate partial flag manifold, and later in \cite{cerulli2017schubert}, which proves that some Schubert varieties arise as irreducible components of certain quiver Grassmannians.
The Bott-Samelson(-Demazure-Hansen) varieties provide natural resolutions of Schubert varieties. They were introduced independently by Demazure and Hansen, and named Bott-Samelson by Demazure \cite{demazure1974desingularisation}.

Given a quiver $Q$ and a $Q$-representation $M$, the quiver Grassmannian $\Gr_{\bf e}(M)$ is the projective variety of subrepresentations $N\subseteq M$ of dimension vector $\mathbf{e}$. They first appeared in \cite{crawle-boevey1989quiver, schofield1992quiver} and have since been extensively studied, for instance as a tool in cluster algebra theory \cite{caldero2006cluster} or for studying linear degenerations of the flag variety \cite{fourier2020lineardegenerations,cerulliirelli2012quiveranddegenerate,feigin2010grassmanndegenerations, feigin2013frobeniussplittin}.
Notably, every projective variety arises as a quiver Grassmannian \cite{reineke2013projectiveisquiver}, or, more generally, as the quiver Grassmannian of every wild quiver \cite{ringel2018quiver}.

In this work, we first define a special class of quivers with relations $(\Gamma, I)$ and a $(\Gamma, I)$-representation $M$ (see Definition \eqref{def:repM}). Given, for instance, an ambient dimension $n+1=4$, then $(\Gamma, I)$ and $M$ are
\begin{equation*}
\begin{tikzcd}[row sep=small]
\overset{\C}{\bullet} \ar[r, "\id"] \ar[d, "\iota_{2,1}"]  & \overset{\C}{\bullet} \ar[r, "\id"] \ar[d, "\iota_{2,1}"] & \overset{\C}{\bullet} \ar[d, "\iota_{2,1}"] \\
\overset{\C^2}{\bullet} \ar[ur, phantom, "\scalebox{1.5}{$\circlearrowleft$}"] \ar[r, "\id"] \ar[d, "\iota_{3,2}"] & \overset{\C^2}{\bullet} \ar[ur, phantom, "\scalebox{1.5}{$\circlearrowleft$}"] \ar[r, "\id"] \ar[d, "\iota_{3,2}"]  & \overset{\C^2}{\bullet}  \ar[d, "\iota_{3,2}"]\\
\overset{\C^{3}}{\bullet} \ar[ur, phantom, "\scalebox{1.5}{$\circlearrowleft$}"] \ar[r, "\id"] \ar[d, "\iota_{4,3}"] & \overset{\C^{3}}{\bullet} \ar[ur, phantom, "\scalebox{1.5}{$\circlearrowleft$}"] \ar[r, "\id"] \ar[d, "\iota_{4,3}"] & \overset{\C^{3}}{\bullet} \ar[d, "\iota_{4,3}"] \\
\overset{\C^{4}}{\bullet} \ar[ur, phantom, "\scalebox{1.5}{$\circlearrowleft$}"] \ar[r, "\id"] & \overset{\C^{4}}{\bullet} \ar[ur, phantom, "\scalebox{1.5}{$\circlearrowleft$}"] \ar[r, "\id"] & \overset{\C^{4}}{\bullet} 
\end{tikzcd}.
\end{equation*}
Because of the shape of this particular quiver, it is convenient to visualise it as a grid, or matrix, and denote its vertices by double indices $(i,j)$. Consequently, given $M$ as above (and analogously for any other $(\Gamma, I)$-representation), we denote by $M_{i,j}$ the vector space associated to vertex $(i,j)$.

Our first result is the following:

\begin{maintheorem}\label{thm:1}
    $M$ is a rigid representation of $(\Gamma, I)$.
\end{maintheorem}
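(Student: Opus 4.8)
The plan is to prove the (a priori stronger) statement that $M$ is a \emph{projective} $(\Gamma,I)$-module; rigidity is then immediate, since $\Ext^1_A(P,-)=0$ for any projective $P$ over any algebra $A$. Write $A=\C\Gamma/I$. Because $\Gamma$ is acyclic and $I$ is generated by the commutativity relations of its squares, $A$ is the incidence algebra of the finite poset $P$ whose points are the vertices $(i,j)$ and whose order is generated by the arrows, so that $(k,l)\le(i,j)$ precisely when $k\le i$ and $l\le j$; equivalently, $(\Gamma,I)$-representations are functors $P\to\VecCat$. In this language the indecomposable projective $P_{(k,l)}$ is the \emph{thin} representation equal to $\C$ at every vertex $\ge(k,l)$ and $0$ elsewhere, with all structure maps the identity.

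First I would split $M$ along the standard flags. Fixing in each $M_{i,j}=\C^{i}$ the standard basis $e_1,\dots,e_i$, set, for $1\le k\le n+1$, $M^{(k)}_{i,j}=\C e_k$ if $i\ge k$ and $M^{(k)}_{i,j}=0$ otherwise. Because the horizontal maps of $M$ are identities and the vertical map $\iota_{i+1,i}$ sends $e_k\mapsto e_k$, each $M^{(k)}$ is a subrepresentation, and the vertex-wise equality $\C^{i}=\bigoplus_{k\le i}\C e_k$ upgrades to a decomposition $M=\bigoplus_{k=1}^{n+1}M^{(k)}$ of $(\Gamma,I)$-representations. It then remains to recognise each summand: the support $\{(i,j):i\ge k\}$ of $M^{(k)}$ is exactly the principal up-set generated by the vertex $(k,1)$ (since $j\ge 1$ always), and its structure maps are all identities, whence $M^{(k)}\cong P_{(k,1)}$. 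Therefore $M\cong\bigoplus_{k=1}^{n+1}P_{(k,1)}$ is projective, hence rigid.

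The only point requiring real care is the identification $M^{(k)}\cong P_{(k,1)}$: using the explicit shape of $\Gamma$ and the structure maps fixed in Definition~\ref{def:repM}, one must check that the support of each summand is a principal up-set — i.e. has a unique minimal vertex — and that the restricted maps are genuinely the relevant identities rather than isomorphisms onto a different coordinate line. Once the combinatorics of $\Gamma$ is matched with the standard-flag structure of $M$ this is routine, but it is where all the content lies; everything afterwards is formal homological algebra. If one prefers to avoid the incidence-algebra description altogether, the same proof can be run by hand: exhibit morphisms $P_{(k,1)}\to M$ sending the generator to $e_k$, check they assemble into an isomorphism $\bigoplus_{k}P_{(k,1)}\xrightarrow{\ \sim\ }M$, and conclude $\Ext^1(M,M)=\Ext^1\bigl(\bigoplus_{k}P_{(k,1)},M\bigr)=0$.
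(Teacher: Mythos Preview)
Your argument is correct and in fact proves a bit more than the paper does: $M$ is projective, not merely rigid. The identification of $A=\C\Gamma/I$ with the incidence algebra of the product poset $[n{+}1]\times[n]$ is valid precisely because the commutativity relations force a unique path class between any two comparable vertices, and the description of the indecomposable projectives as thin modules on principal up-sets follows. Your splitting $M=\bigoplus_{k}M^{(k)}$ along the standard coordinate lines is well-defined because both $\id$ and $\iota_{i+1,i}$ send $e_k\mapsto e_k$, and the support $\{(i,j):i\ge k\}$ is indeed the up-set of $(k,1)$, so $M^{(k)}\cong P_{(k,1)}$.

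The paper takes a different route. It first restricts $M$ to the leftmost column $\Gamma'\cong\mathbb{A}_{n+1}$, writes the restriction $M'$ as $\bigoplus_{i}U_{i,n+1}$, and checks rigidity there via the known $\Ext$-combinatorics of type~$\mathbb{A}$. It then invokes an ``inflation'' functor $\Phi:\rep_{\C}(\Gamma')\to\rep_{\C}(\Gamma,I)$, due to Maksimau, which is exact, fully faithful and sends projectives to projectives, so that $\Ext^1_{(\Gamma,I)}(M,M)\cong\Ext^1_{\Gamma'}(M',M')=0$. The two arguments are secretly the same decomposition viewed from different angles: your $P_{(k,1)}$ is exactly $\Phi(U_{k,n+1})$. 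What your approach buys is self-containment---no external citation is needed, and projectivity drops out for free. What the paper's approach buys is a reusable transfer principle: any statement about $\Ext$ groups on the type~$\mathbb{A}$ column lifts to the whole grid, which may be handy if one later wants to vary the representation.
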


Applying \cite[Proposition 3.8]{irelli2021cell} and \cite[Proposition 2.2]{cerulliirelli2012quiveranddegenerate}, we deduce from Theorem \ref{thm:1} that any quiver Grassmannian associated to the $(\Gamma, I)$-representation $M$ is a smooth and irreducible variety.

We then consider a Schubert variety $X_w$ in the flag variety $\Flag_{n+1}$, for any fixed permutation $w\in S_{n+1}$, and define the dimension vector ${\bf r}^w$ for $(\Gamma, I)$ according to $w$.

\begin{maintheorem}\label{thm:2}
  The quiver Grassmannian $\Gr_{{\bf r}^w}(M)$ is isomorphic to any Bott-Samelson resolution of $X_w$ associated to a geometrically compatible decomposition of $w$.
\end{maintheorem}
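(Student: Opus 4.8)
The plan is to identify both $\Gr_{{\bf r}^w}(M)$ and the Bott-Samelson variety $\bs(\underline w)$ of the geometrically compatible decomposition $\underline w=(s_{i_1},\dots,s_{i_\ell})$ of $w$ with one and the same variety of tuples of subspaces, and to exhibit explicit mutually inverse morphisms between them. First I would unfold the quiver Grassmannian. Every arrow of $(\Gamma,I)$ is either an identity or one of the coordinate inclusions $\iota_{i+1,i}\colon\C^{i}\hookrightarrow\C^{i+1}$, and the relations $I$ are the commutativity relations of the square cells, which $M$ obviously satisfies; hence a point of $\Gr_{{\bf r}^w}(M)$ is the same thing as a family of subspaces $N_{i,j}\subseteq M_{i,j}$ with $\dim N_{i,j}=({\bf r}^w)_{i,j}$ such that $N_{i,j}\subseteq N_{i,j+1}$ along every horizontal arrow and $\iota_{i+1,i}(N_{i,j})\subseteq N_{i+1,j}$ along every vertical arrow. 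Equivalently, $\Gr_{{\bf r}^w}(M)$ is the closed subvariety of $\prod_{i,j}\Gr\big(({\bf r}^w)_{i,j},\C^{i}\big)$ cut out by these incidences, and in particular any map built below by merely selecting and relabelling such subspaces is automatically a morphism of varieties.

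Next I would unfold the other side, using the standard description of $\bs(\underline w)$ as the variety of tuples of complete flags $(F^{(0)},F^{(1)},\dots,F^{(\ell)})$ in $\C^{n+1}$ with $F^{(0)}$ the standard coordinate flag and $F^{(m-1)}$, $F^{(m)}$ in relative position $\le s_{i_m}$, i.e.\ $F^{(m-1)}_a=F^{(m)}_a$ for all $a\ne i_m$; since $\underline w$ is reduced this comes with the resolution $\pi\colon\bs(\underline w)\to X_w\subseteq\Flag_{n+1}$ recording $F^{(\ell)}$. For each level $a$ the chain $F^{(0)}_a,\dots,F^{(\ell)}_a$ changes only at the steps $m$ with $i_m=a$, so its distinct terms form a short list of subspaces whose length and mutual inclusions are dictated by $\underline w$; the point of the definition of a geometrically compatible decomposition is precisely that these lists, over all $a$, fit on the grid. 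I would then define $\Phi\colon\Gr_{{\bf r}^w}(M)\to\bs(\underline w)$ by letting $F^{(m)}_a$ be the image in $\C^{n+1}$ of the subspace $N_{i,j}$ at the vertex that records level $a$ right after step $m$, and $\Psi$ in the opposite direction by reassembling the $N_{i,j}$ from the flag components of a tuple.

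The verification then has four parts. (i) $\Psi$ lands in $\Gr_{{\bf r}^w}(M)$: the flag axioms together with the relative-position conditions translate precisely into the horizontal inclusions $N_{i,j}\subseteq N_{i,j+1}$ and the vertical inclusions $\iota_{i+1,i}(N_{i,j})\subseteq N_{i+1,j}$, and the dimensions of the reassembled subspaces are by construction the entries of ${\bf r}^w$. (ii) $\Phi$ lands in $\bs(\underline w)$: the grid inclusions force each $F^{(m)}$ to be a genuine flag and force $F^{(m-1)}_a=F^{(m)}_a$ for $a\ne i_m$, while $F^{(m)}_{i_m-1}\subseteq F^{(m)}_{i_m}\subseteq F^{(m)}_{i_m+1}$ is automatic from the vertical inclusions and a dimension count, so $F^{(m)}_{i_m}$ genuinely varies in the expected $\P^1$. (iii) $\Phi$ and $\Psi$ are morphisms of varieties, by the first paragraph. (iv) $\Phi\circ\Psi=\id$ and $\Psi\circ\Phi=\id$, by inspection. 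This gives $\Gr_{{\bf r}^w}(M)\cong\bs(\underline w)$, and since $\Phi$ carries the projection of $\Gr_{{\bf r}^w}(M)$ onto the subspaces at the vertices recording $F^{(\ell)}$ to $\pi$, the isomorphism is one of resolutions of $X_w$. (Alternatively one may construct $\Phi$ only, check that it is bijective, and invoke Theorem \ref{thm:1}: $\Gr_{{\bf r}^w}(M)$ is then smooth, hence normal, so over $\C$ a bijective morphism onto it from the projective variety $\bs(\underline w)$ is automatically an isomorphism.)

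The hard part will be the bookkeeping in the middle step: reading off the definition of a geometrically compatible decomposition an unambiguous rule that assigns to each pair (level $a$, step $m$) the vertex $(i,j)$ storing the corresponding subspace, and then checking that the incidences hard-wired into the grid reproduce the Bott-Samelson relative-position conditions \emph{exactly} --- neither adding constraints nor leaving any of the $\P^1$-moduli free --- which is also what pins ${\bf r}^w$ down as the unique dimension vector for which this works. Finally, it is the reducedness of $\underline w$ supplied by geometric compatibility that makes $\pi$, hence the resulting identification, a genuine resolution of $X_w$ rather than merely a proper map onto a possibly smaller Schubert variety.
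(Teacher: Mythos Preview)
Your proposal is correct and follows essentially the same route as the paper: both arguments use a geometrically compatible decomposition of $w$ to set up an explicit dictionary between the ``free'' grid vertices $(p,q)$ satisfying \eqref{eq:freesubspaces} and the steps of the Bott--Samelson tower, and then check that the grid incidences reproduce exactly the relative-position conditions. The only difference is organisational: the paper packages the vertex assignment into the closed formula $\varphi_w(i_k)=(i_k+1+n_k,\,i_k+m_k)$ and verifies the match by induction on $\ell(w)$, whereas you propose building $\Phi$ and $\Psi$ directly and checking the four items (or invoking smoothness from Theorem~\ref{thm:1} to upgrade a bijection); either verification works, and your alternative via rigidity is a nice shortcut the paper does not use.
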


As part of the proof of Theorem \ref{thm:2} (see Theorem \ref{thm:BSisom}) we  give an explicit description of the isomorphism between such a Bott-Samelson resolution of $X_w$ and our quiver Grassmannian $\Gr_{{\bf r}^w}(M)$. We remark that the quiver, the representation and the corresponding quiver Grassmannian we use are different from the ones that would be obtained from, for instance, the general construction for quiver Grassmannians given in \cite{reineke2013projectiveisquiver}. This allows us not only to exploit an algebraic property of the representation $M$ (i.e. its rigidity) to deduce geometrical properties of the associated variety $\Gr_{{\bf r}^w}(M)$, but also to give a straightforward correspondence between the points of $\Gr_{{\bf r}^w}(M)$ and those of the Bott-Samelson resolution of $X_w$ which relies only on the combinatorial definition of the latter (see Definition \ref{def:BS}).

Finally, we make use of a combinatorial characterisation of smooth Schubert varieties: it was first proved in \cite{lakshmibai1990criterion} that a Schubert variety $X_w$ is smooth if and only if $w$ avoids the patterns $[4231]$ and $[3412]$, and this criterion was later characterised in \cite{gasharov2002cohomology} in terms of the conditions that define the flags in $X_w$. We exploit this characterisation and provide an explicit isomorphism between a fixed smooth Schubert variety and the quiver Grassmannian $\Gr_{{\bf e}^w}(M)$ of $(\Gamma, I)$, for a special dimension vector ${\bf e}^w$:

\begin{maintheorem}
If $w\in S_{n+1}$ avoids the patterns $[4231]$ and $[3412]$, the quiver Grassmannian $\Gr_{{\bf e}^w}(M)$ is isomorphic to the Schubert variety $X_w$ and the isomorphism is given by
\begin{gather*}
\begin{aligned}
\varphi: \Gr_{\bf{e}}(M) &\to X_w \\
N &\mapsto N\bm{.}
\end{aligned}
\end{gather*}
where $N\bm{.}=N_{n+1,1}\subseteq N_{n+1,2}\subseteq\dots\subseteq N_{n+1,n}\,$.
\end{maintheorem}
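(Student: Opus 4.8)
The plan is to exhibit $\varphi$ as a bijective morphism of projective varieties and then upgrade it to an isomorphism. Recall first that, by Theorem~\ref{thm:1} together with \cite[Proposition~3.8]{irelli2021cell} and \cite[Proposition~2.2]{cerulliirelli2012quiveranddegenerate}, the quiver Grassmannian $\Gr_{\mathbf{e}^w}(M)$ is smooth and irreducible, in particular normal. The map $N\mapsto N_{\bullet}$ is algebraic: it is the restriction to $\Gr_{\mathbf{e}^w}(M)$ of the product of the canonical projections to the ordinary Grassmannians $\Gr_{(\mathbf{e}^w)_{n+1,j}}(M_{n+1,j})$, followed by the identifications $M_{n+1,1}=\dots=M_{n+1,n}=\C^{n+1}$ supplied by the identity maps along the bottom row. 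Since $N$ is a subrepresentation, the horizontal arrows force $N_{n+1,1}\subseteq\dots\subseteq N_{n+1,n}$, and the bottom-row entries of $\mathbf{e}^w$ being $1,\dots,n$ this is a complete flag; so $\varphi$ is a morphism $\Gr_{\mathbf{e}^w}(M)\to\Flag_{n+1}$.

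Next I would check that $\varphi$ takes values in $X_w$. Write $E_i=M_{i,j}=\C^i$ for the $i$-th space of the standard reference flag. Following the vertical arrows down column $j$ identifies $N_{i,j}$ with a subspace of $N_{n+1,j}$ that lies inside $E_i$, so $\dim(E_i\cap N_{n+1,j})\geq\dim N_{i,j}=(\mathbf{e}^w)_{i,j}$ for every vertex $(i,j)$. By the definition of $\mathbf{e}^w$ these inequalities are exactly those that cut out $X_w$ — here one uses that $w$ avoids $[4231]$ and $[3412]$ and the description of the defining conditions of $X_w$ given in \cite{gasharov2002cohomology} — hence $\varphi(N)\in X_w$.

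The core of the argument is to build a two-sided inverse. For surjectivity, given $V_{\bullet}\in X_w$ I would define subspaces $N_{i,j}\subseteq E_i$ by an explicit rule read off from $w$ (equivalently, from its essential set), and verify that $\dim N_{i,j}=(\mathbf{e}^w)_{i,j}$, that the family $(N_{i,j})$ respects the horizontal and vertical relations of $(\Gamma,I)$, and that $N_{n+1,j}=V_j$; carried out in families this also produces a morphism $X_w\to\Gr_{\mathbf{e}^w}(M)$. For injectivity — which I expect to be the main obstacle — I would show that any $N\in\Gr_{\mathbf{e}^w}(M)$ is already determined by $N_{\bullet}$. The mechanism is a squeeze: since $N$ is a subrepresentation, $N_{i-1,j}+N_{i,j-1}\subseteq N_{i,j}\subseteq E_i\cap N_{i+1,j}\cap N_{i,j+1}$, and the combinatorics of $\mathbf{e}^w$ is arranged — precisely because $w$ avoids $[4231]$ and $[3412]$ — so that at every vertex one of these two bounds already has dimension $(\mathbf{e}^w)_{i,j}$; an induction (for instance lexicographic in $(i,j)$, or outward along each row) then pins $N_{i,j}$ down. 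This is where the smoothness hypothesis is genuinely used: for a non-smooth $w$ the two bounds leave real freedom and the analogous map is no longer injective.

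Finally, $\varphi$ is a bijective morphism from the projective variety $\Gr_{\mathbf{e}^w}(M)$ onto the normal projective variety $X_w$ over $\C$ (Schubert varieties being normal), hence an isomorphism; equivalently, its inverse is the morphism constructed in the surjectivity step. I expect that the verification that the constructed family $(N_{i,j})$ is a subrepresentation, and the injectivity induction, are the only genuinely substantial parts, everything else being formal once the definition of $\mathbf{e}^w$ and the description of $X_w$ from \cite{gasharov2002cohomology} are in hand.
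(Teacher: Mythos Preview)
Your approach is correct and essentially mirrors the paper's: both rely on the Gasharov--Reiner characterisation of $X_w$ by non-crossing inclusions, identifying these with the first case of the definition of $\mathbf{e}^w$, while the second case $e^w_{i,j}=\max\{e^w_{i-1,j},e^w_{i,j-1}\}$ is exactly what makes your squeeze argument force each remaining $N_{i,j}$ to coincide with one of its neighbours. The paper's proof is considerably terser---it does not spell out the inverse morphism or appeal to normality---but the substance is the same.
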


The paper is organised as follows: Section 2 and 3 are dedicated, respectively, to basic facts about quiver Grassmannians and about Schubert varieties in the flag variety; in Section 4 we define the quiver $(\Gamma, I)$, its representation $M$ and prove Theorem 1; Section 5 is mainly concerned with proving that all permutations admit a certain reduced decomposition, called $\textit{geometrically compatible}$ decomposition, in order to prove Theorem 2; in Section 6 we define a special dimension vector for $(\Gamma, I)$ and prove Theorem 3.

\section*{Acknowledgments}
The author would like to thank Giovanni Cerulli Irelli, Evgeny Feigin, Alexander Pütz and Markus Reineke for helpful discussions. Many thanks to Ghislain Fourier and Martina Lanini for the insightful discussions and comments on this project. Further thanks to David Anderson and Markus Reineke for pointing out a few corrections in the first version of this paper.
This work is a contribution to the SFB-TRR 195 “Symbolic Tools in Mathematics and their Applications” of the German Research Foundation (DFG).

\section{Background on quiver Grassmannians}

We first collect some facts about quiver representations and quiver Grassmannians. Standard references are \cite{crawley1992lectures,schiffler2014quiver}.

\begin{definition}
    A finite \Def{quiver} $Q=(Q_0,Q_1,s,t)$ is given by a finite set of vertices $Q_0$, a finite set of arrows $Q_1$ and two maps $s,t: Q_1\to Q_0$ assigning to each arrow its source, resp. target.
 \end{definition}

 \begin{definition}
     Given a quiver $Q$, the finite-dimensional $Q$-\Def{representation} $M$ over an algebraically closed field $\K$ is the ordered pair $((M_i)_{i\in Q_0},(M^{\alpha})_{\alpha\in Q_1})$, where $M_i$ is a finite-dimensional $\K$-vector space attached to vertex $i\in Q_0$ and $M^{\alpha}:M_{s(\alpha)}\to M_{t(\alpha)}$ is a $\K$-linear map for any $\alpha \in Q_1$.
     The \Def{dimension vector} of $M$ is $\dimvec M \coloneqq(\dim_{\K}M_i)_{i\in Q_0} \in \Z^{|Q_0|}_{\geq 0}$.

     A \Def{subrepresentation} of $M$, denoted by $N=((N_i)_{i\in Q_0},(M^{\alpha}\restriction_{N_{s(\alpha)}})_{\alpha\in Q_1})$, is a $Q$-representation such that $N_i\subseteq M_i $ for any $ i\in Q_0$ and $M^{\alpha}(N_{s(\alpha)})\subseteq N_{t(\alpha)}$ for any $\alpha\in Q_1$.
     \end{definition}

 The finite-dimensional $Q$-representations over $\K$ form a category, denoted by $\rep_{\K}(Q)$, where a morphism $\phi$ between $M$ and $M'$ in $\rep_{\K}(Q)$ is given by linear maps $\phi_i:M_i\to M'_i \quad \forall i \in Q_0$ such that $\phi_{t(\alpha)} \circ M^{\alpha}=M'^{\alpha}\circ \phi_{s(\alpha)}$. It is known (see \cite[Theorem 5.4]{schiffler2014quiver} for a proof) that $\rep_{\K}(Q)$ is equivalent to the category $A$-$\modCat$ of finite-dimensional modules over the path algebra $A=\K Q$ of $Q$, and that it is abelian, Krull-Schmidt and hereditary. For $M,N \in \rep_{\K}(Q)$ we use the standard notation
\begin{equation*}
    [M,N]\coloneqq \dim_{\K}\Hom_Q(M,N), \; [M,M]^1\coloneqq \dim_{\K}\Ext^1_Q(M,M).
\end{equation*}

We call a representation $M$ \Def{rigid} if it has no self-extensions, which means $[M,M]^1=0$.
Finally, we denote by
\begin{equation*}
\langle M,N \rangle = [M,N]-[M,N]^1
\end{equation*}
the Euler-Ringel form of $Q$, which can be computed (see for instance \cite{bongartz1983algebras} for details) via the bilinear form $\langle -,-\rangle : \Z^{|Q_0|} \times \Z^{|Q_0|} \to \Z$ defined as
\begin{equation}\label{eq:eulerform}
   \langle \textbf{d} ^M, \textbf{d} ^N \rangle = \langle \dimvec M, \dimvec N \rangle \coloneqq \sum_{i \in Q_0} d_i^M d_i^N - \sum_{\alpha \in Q_1} d_{s(\alpha)}^M d_{t(\alpha)}^N.
\end{equation}

\begin{definition}
    A \Def{relation} on a quiver $Q$ is a subspace of the path algebra of $Q$ spanned by linear combinations of paths with common source and target, of length at least 2.
    Given a two-sided ideal $I$ of $\K Q$ generated by relations, the pair $(Q,I)$ is a \Def{quiver with relations} and the quotient algebra $\K Q/I$ is the path algebra of $(Q,I)$.
\end{definition}

A system of relations for $I$ is defined as a subset $R$ of $\cup_{i,j\in Q_0} iIj$, where $i$ denotes the trivial path on vertex $i$, such that $R$, but no proper subset of $R$, generates $I$ as a two-sided ideal. For any two vertices $i$ and $j$, we denote by $r(i,j,R)$ the cardinality of the set $R\cap iIj$, which contains those elements in $R$ that are linear combinations of paths starting in $i$ and ending in $j$. If $Q$ contains no oriented cycle, then the numbers $r(i,j,R)$ are independent of the chosen system of relations (see for instance \cite{bongartz1983algebras}), and can therefore be denoted by $r(i,j)$.
The category $\rep_{\K}(Q,I)$ consists of the finite-dimensional representations of $Q$ that satisfy the relations in $I$.

For a quiver with relations $(Q,I)$ and no oriented cycles, the Euler-Ringel form is given by
\begin{equation}\label{eq:eulerformrelations}
   \langle \textbf{d} ^M, \textbf{d}^N \rangle = \sum_{i \in Q_0} d_i^M d_i^N - \sum_{\alpha \in Q_1} d_{s(\alpha)}^M d_{t(\alpha)}^N + \sum_{i,j \in Q_0} r(i,j) d_i^M d_j^N.
\end{equation}
More background and details can be found for instance in \cite[Section 2.2]{bongartz1983algebras} or \cite{derksen2002semi}. To simplify notation, we will sometimes denote a $Q$-representation $M$ by its tuple of vector spaces $(M_i)_{i \in Q_0}$ when the assignment of the linear maps is clear from context.

\begin{definition}
    Consider a quiver $Q$, a $Q$-representation $M$ and a dimension vector $\bf e\in\Z^{Q_0}_{\geq 0}$ such that $e_i\leq \dim M_i \; \forall i\in Q_0$. The \Def{quiver Grassmannian} $\Gr_{\bf e}(M)$ is defined as the collection of all subrepresentations $N$ of $M$ with $\dim N_i=e_i $ for all $ i\in Q_0$.
\end{definition}

    Analogous to Grassmannians and flag varieties, non-empty quiver Grassmannians can be realised as closed subvarieties of products of Grassmannians, via the close embedding
    \begin{equation*}
        \iota: \Gr_{\bf e}(M) \to \prod_{i \in Q_0}\Gr(e_i,M_i)
    \end{equation*}
    which sends a subrepresentation $N$ of $M$ to the collection of $e_i$-dimensional subspaces $N_i$ of $M_i$.

\begin{example}[The flag variety]\label{ex: flag variety}

    In \cite[Proposition 2.7]{cerulliirelli2012quiveranddegenerate}, the authors realise the (linear degenerate) flag variety as the quiver Grassmannian associated to certain representations of the equioriented quiver of type $\mathbb{A}_n$. In particular, the complete flag variety $\Flag_{n+1}$ in $\C^{n+1}$ can be realised as follows.

    Consider the quiver with $n$ vertices, ordered from 1 to $n$, and $n-1$ arrows of the form $i\to i+1$. We fix the  dimension vector ${\bf e}=(1,2,\dots,n)$ and the representation $M$ with $M_i=\C^{n+1}$ for $i=1,\dots,n$ and $M^{\alpha}=\id$ for all arrows $\alpha$:
\begin{equation*}
\begin{tikzcd}[]
\overset{\C^{n+1}}{\bullet} \ar[r, "\id"] & \overset{\C^{n+1}}{\bullet} \ar[r, "\id"] & ... \ar[r, "\id"] & \overset{\C^{n+1}}{\bullet}.\\
\end{tikzcd}
\vspace{-0.7cm}
\end{equation*}
 The quiver Grassmannian $\Gr_{\bf e}(M)$ consists precisely of the subrepresentations $N$ of $M$ with $\dim(N_i)=i$, i.e. full flags of vector subspaces.
\end{example}

\section{Background on Schubert varieties}\label{sec:schubvar}
Given any $v_1,\dots,v_r$ in $\C^{n+1}$, we denote by $\langle v_1,\dots, v_r \rangle$ their $\C$-linear span. To define Schubert varieties in $\Flag_{n+1}$, we first fix a basis $\mathcal{B}=\set{b_1, b_2,\dots, b_{n+1}}$ of $\C^{n+1}$ and denote by $F\bm{.}$ the standard flag $\langle b_1\rangle \subseteq  \langle b_1, b_2\rangle \subseteq \dots \subseteq \langle b_1, b_2,\dots, b_{n+1} \rangle$ and by $S_{n+1}$ the symmetric group on $n+1$ elements. More facts and details about Schubert varieties can be found for instance in \cite[Part III]{fulton1997young}.

\begin{definition}\label{def:schubcell}
For $w \in S_{n+1}$, the \Def{Schubert cell} $X_w^\circ$ is
\begin{equation*}
X^\circ_w = \set{V\bm{.}\in \Flag_{n+1}: \dim(F_p\cap V_q)= \#\set{k\leq q : w(k)\leq p},\; 1 \leq p, q \leq n+1}.
\end{equation*}
\end{definition}

\begin{definition}\label{def:schubvar}
The \Def{Schubert variety} $X_w$ is defined as the closure in $\Flag_{n+1}$ of the cell $X_w^\circ$, that is
\begin{equation*}
X_w = \set{V\bm{.}\in \Flag_{n+1}: \dim(F_p\cap V_q)\geq \#\set{k\leq q : w(k)\leq p}, \;1 \leq p, q \leq n+1}.
\end{equation*}
\end{definition}

We observe that the conditions on the intersections between the $F_p$ and the $V_q$ imply, for each pair $p,q$, one of the following : $F_p\subset V_q$, $F_p\supset V_q$, $F_i=V_j$ or $F_p\cap V_q= U$ with $0\leq\dim(U)<\min\set{p,q}$. A minimal set of conditions that imply all the conditions defining a Schubert variety $X_w$ has been described in terms of essential sets of the permutation $w$ (see \cite[Section 3]{fulton1992flags} or \cite[Section 4]{gasharov2002cohomology}).
Each Schubert variety $X_w$ is an irreducible subvariety of $\Flag_{n+1}$, and its dimension is given by the number of inversions in $w$, called length: $$\ell(w)= \#\{ i<j : w(i)>w(j) \}.$$ 
The length of a permutation $w$ is also the minimal number of simple transpositions needed to form a decomposition of $w$, called reduced decomposition: $w=s_{\ell(w)}\cdots s_1$, where $s_i$ denotes the swap of $i$ and $i+1$. We recall that, in general, a permutation admits more than one reduced decomposition.

The Schubert variety $X_w$ consists of the cell $X_w^\circ$, which is open and dense in $X_w$, and of the cells corresponding to permutations that are smaller than $w$ with respect to the Chevalley-Bruhat order $\leq$ on $S_{n+1}$: $ X_w = \mathop{\sqcup}_{u\leq w} X_u^{\circ}$.
The relation $u\leq w$ holds under the Chevalley-Bruhat (partial) order on $S_{n+1}$ if a reduced expression of $w$ contains a subexpression which is a reduced expression for $u$. 

For the purposes of this paper, we consider the intersections $F_p\cap V_q$ instead of the standard $V_p\cap F_q$ in the definition of Schubert varieties, and to simplify the notation we write $r^w_{p,q}$ for the numbers $\#\set{k\leq q : w(k)\leq p}$.
We represent a permutation $w$ in $S_n$ by listing its (naturally) ordered images, that is, its one-line notation $w=[w(1)w(2)\dots w(n)]$.


\begin{example}
For $e=[1\,2\dots n+1]$ and $w_0= [n+1 \, n\dots 1]$ in $S_{n+1}$, it is easy to compute from Definition \ref{def:schubvar} the Schubert varieties of minimal and maximal dimension, respectively $X_e = \{ F\bm{.} \}$ and $X_{w_0}=\Flag_{n+1}$.
\end{example}


Smooth Schubert varieties were characterised combinatorially in \cite{lakshmibai1990criterion}: a Schubert variety $X_w$ is smooth if and only if $w$ avoids the patterns $[4231]$ and $[3412]$. We recall that a permutation $w=[w(1)w(2)\dots w(n)]$ avoids a pattern $\pi$ if no subsequence of $w$ has the same relative order as the entries of $\pi$.
In \cite[Theorem 1.1]{gasharov2002cohomology}, the authors prove that this pattern-avoiding condition is equivalent to $X_w$ being defined by non-crossing inclusions:

\begin{definition}(\cite[Section 1]{gasharov2002cohomology})\label{def:inclusions}
A Schubert variety $X_w$ is \Def{defined by inclusions} if the defining conditions on each $V_q$ (see Definition \ref{def:schubvar}) are a conjunction of conditions of the form $V_q \subseteq F_p$ and $V_q \supseteq F_{s}$, for some $p$ and $s$. A pair of conditions $V_{q}\subset F_{p}$ and $F_{p'}\subset V_{q'} $ is \Def{crossing} if $q<q'$ and $p>p'$.

If $X_w$ is defined by inclusions and its conditions do not contain any crossing pair, then $X_w$ is \Def{defined by non-crossing inclusions}.
\end{definition}

\begin{example}
All permutations in $S_3$ are defined by non-crossing inclusions.

In $S_5$, the permutation $w=[31542]$ avoids both patterns $[4231]$ and $[3412]$, which means that $X_{w}$ is defined by non-crossing inclusions. We can compute these inclusions using Definition \ref{def:schubvar}: a flag $V\bm{.}$ is in $X_{w}$ if and only if
\begin{equation*}
    V_1 \subseteq F_3,\; F_1 \subseteq V_2\subseteq F_3,\; F_1\subseteq V_3,\; F_1\subseteq V_4 .
\end{equation*}
The same conditions can be described without redundancy as $F_1 \subseteq V_2\subseteq F_3$, which is a pair of non-crossing inclusions.

A permutation in $S_5$ that yields crossing inclusions is, for instance, \mbox{$\tau=[45312]$}, which contains the pattern $[3412]$. A flag $V\bm{.}$ is in $X_{\tau}$ if and only if $V_1\subseteq F_4$ and $ F_1\subseteq V_4$.
Finally, the permutation $\pi=[53421]$ in $S_5$ contains the pattern $[4231]$ and defines a non-trivial condition on $X_{\pi}$ that is not an inclusion: a flag $V\bm{.}$ is in $X_{\pi}$ if and only if $\dim(F_3\cap V_2)\geq 1$.
\end{example}

\section{A special quiver with relations and a special representation}\label{sec:quiver}

We now define a quiver with relations and construct a rigid representation for this quiver. This will then be exploited in sections \ref{sec:desing} and \ref{sec:smooth}, together with two different, appropriate choices of dimension vectors, to recover the Bott-Samelson resolution for Schubert varieties and to realise smooth Schubert varieties as the corresponding quiver Grassmannian, respectively.

The following construction of the quiver and its special representation depends on the fixed ambient dimension $n+1$ but not on the chosen Schubert variety (that is, it's independent of the specific permutation $w$ in $S_{n+1}$).

Given $n\in \N_{\geq 2}$, we consider the following
quiver $\Gamma=(\Gamma_0,\Gamma_1):$

\begin{equation*}
\begin{tikzcd}[row sep= large]
\overset{(1,1)}{\bullet} \ar[r, ""] \ar[d, ""]  & \overset{(1,2)}{\bullet} \ar[r, ""] \ar[d, ""] & ...\ar[r, ""] \ar[d, ""] & \overset{(1,n)}{\bullet} \ar[d, ""] \\
\overset{(2,1)}{\bullet}  \ar[r, ""] \ar[d, ""] & \overset{(2,2)}{\bullet}  \ar[r, ""] \ar[d, ""] & ... \ar[r, ""] \ar[d, ""] & \overset{(2,n)}{\bullet} \ar[d, ""] \\
... \ar[r, ""] \ar[d, ""] & ... \ar[r, ""] \ar[d, ""] & ... \ar[r, ""] \ar[d, ""] & ... \ar[d, ""]\\
\underset{(n+1,1)}{\bullet} \ar[r, ""] & \underset{(n+1,2)}{\bullet} \ar[r, ""] & ... \ar[r, ""] & \underset{(n+1,n)}{\bullet}
\end{tikzcd}
\end{equation*}

where each vertex in $\Gamma_0$ is labelled by a pair $(i,j)$, for $i=1,\dots,n+1 $ and $j=1,\dots,n$. We denote by $\alpha_{(i,j)}^{(k,l)}$ the arrow going from vertex $(i,j)$ to vertex $(k,l)$.

Then, we consider the following relations on $\Gamma$:
\begin{equation}
    \alpha_{(i,j+1)}^{(i+1,j+1)} \alpha_{(i,j)}^{(i,j+1)}=\alpha_{(i+1,j)}^{(i+1,j+1)} \alpha_{(i,j)}^{(i+1,j)}
\end{equation}
for $i=1,\dots,n$, $j=1,\dots,n-1$, and denote by $I$ the ideal of $\C \Gamma$ generated by these relations. We write $(\Gamma, I)$ for the quiver with relations:

\begin{equation*}
\begin{tikzcd}[]
\overset{(1,1)}{\bullet} \ar[r, ""] \ar[d, ""]  & \overset{(1,2)}{\bullet} \ar[r, ""] \ar[d, ""] & ...\ar[r, ""] \ar[d, ""] & \overset{(1,n)}{\bullet} \ar[d, ""] \\
\overset{(2,1)}{\bullet} \ar[ur, phantom, "\scalebox{1.5}{$\circlearrowleft$}"] \ar[r, ""] \ar[d, ""] & \overset{(2,2)}{\bullet} \ar[ur, phantom, "\scalebox{1.5}{$\circlearrowleft$}"] \ar[r, ""] \ar[d, ""] & ... \ar[ur, phantom, "\scalebox{1.5}{$\circlearrowleft$}"] \ar[r, ""] \ar[d, ""] & \overset{(2,n)}{\bullet} \ar[d, ""] \\
... \ar[ur, phantom, xshift=5, "\scalebox{1.5}{$\circlearrowleft$}"] \ar[r, ""] \ar[d, ""] & ... \ar[ur, phantom,xshift=3, "\scalebox{1.5}{$\circlearrowleft$}"] \ar[r, ""] \ar[d, ""] & ... \ar[ur, phantom,xshift=2, "\scalebox{1.5}{$\circlearrowleft$}"] \ar[r, ""] \ar[d, ""] & ... \ar[d, ""]\\
\underset{(n+1,1)}{\bullet} \ar[ur, phantom, "\scalebox{1.5}{$\circlearrowleft$}"] \ar[r, ""] & \underset{(n+1,2)}{\bullet} \ar[ur, phantom, "\scalebox{1.5}{$\circlearrowleft$}"] \ar[r, ""] & ... \ar[ur, phantom, "\scalebox{1.5}{$\circlearrowleft$}"] \ar[r, ""] & \underset{(n+1,n)}{\bullet}
\end{tikzcd}
\end{equation*}

Now, we define the $(\Gamma, I)$-representation $M=((M_{i,j})_{(i,j)\in \Gamma_0},(M^{\alpha})_{\alpha \in \Gamma_1})$ as 
\begin{equation}\label{def:repM}
M_{i,j}=\C^i, \quad
M^{\alpha}=\begin{cases}
    \iota_{i+1,i} & \text{if } s(\alpha)=(i,j), t(\alpha)=(i+1,j) \\
    \id & \text{if } s(\alpha)=(i,j), t(\alpha)=(i,j+1)
\end{cases}
\end{equation}
where $\iota_{i+1,i}$ denotes the inclusion of $\C^i$ into $\C^{i+1}$, represented with respect to the chosen basis $\mathcal{B}=\set{b_1, b_2,\dots, b_{n+1}}$ by the matrix $\iota_{i+1,i}=\begin{bsmallmatrix}
1&0&\dots&0\\
0&1&\dots&0\\
\dots&\dots&\dots&\dots\\
0&0&0&1\\
0&0&0&0
\end{bsmallmatrix}$.


The relations imposed on $\Gamma$ are trivially satisfied by the representation $M$:
\begin{equation*}
\begin{tikzcd}[sep=large]
\overset{\C}{\bullet} \ar[r, "\id"] \ar[d, "\iota_{2,1}"]  & \overset{\C}{\bullet} \ar[r, "\id"] \ar[d, "\iota_{2,1}"] & ...\ar[r, "\id"] \ar[d, "\iota_{2,1}"] & \overset{\C}{\bullet} \ar[d, "\iota_{2,1}"] \\
\overset{\C^2}{\bullet} \ar[ur, phantom, "\scalebox{1.5}{$\circlearrowleft$}"] \ar[r, "\id"] \ar[d, "\iota_{3,2}"] & \overset{\C^2}{\bullet} \ar[ur, phantom, "\scalebox{1.5}{$\circlearrowleft$}"] \ar[r, "\id"] \ar[d, "\iota_{3,2}"]  & ... \ar[ur, phantom,xshift=5, "\scalebox{1.5}{$\circlearrowleft$}"] \ar[r, "\id"] \ar[d, "\iota_{3,2}"] & \overset{\C^2}{\bullet} \ar[d, "\iota_{3,2}"] \\
... \ar[ur, phantom,xshift=5, "\scalebox{1.5}{$\circlearrowleft$}"] \ar[r, "\id"] \ar[d, "\iota_{n+1,n}"] & ... \ar[ur, phantom,xshift=5, "\scalebox{1.5}{$\circlearrowleft$}"] \ar[r, "\id"] \ar[d, "\iota_{n+1,n}"] & ... \ar[ur, phantom, xshift=5, "\scalebox{1.5}{$\circlearrowleft$}"] \ar[r, "\id"] \ar[d, "\iota_{n+1,n}"] & ... \ar[d, "\iota_{n+1,n}"]\\
\overset{\C^{n+1}}{\bullet} \ar[ur, phantom, "\scalebox{1.5}{$\circlearrowleft$}"] \ar[r, "\id"] & \overset{\C^{n+1}}{\bullet} \ar[ur, phantom, "\scalebox{1.5}{$\circlearrowleft$}"] \ar[r, "\id"] & ... \ar[ur, phantom, xshift=5, "\scalebox{1.5}{$\circlearrowleft$}"] \ar[r, "\id"] & \overset{\C^{n+1}}{\bullet}\\
\end{tikzcd}.
\end{equation*}

We now want to show that $M$ is a rigid representation of $(\Gamma, I)$. To do so, we first consider the following subquiver $\Gamma'$ of $\Gamma$
\begin{equation*}
\begin{tikzcd}[]
\overset{(1,1)}{\bullet} \ar[r, "\iota_{2,1}"] & \overset{(2,1)}{\bullet} \ar[r, "\iota_{3,2}"] & \overset{(3,1)}{\bullet}  \ar[r, "\iota_{4,3}"] & \cdots \ar[r, "\iota_{n+1,n}"] & \overset{(n+1,1)}{\bullet}\\
\end{tikzcd},
\end{equation*}
i.e. the equioriented Dynkin quiver $\mathbb{A}_{n+1}$, and call $M'$ the restriction of the representation $M$ to $\Gamma'$:
\begin{equation*}
\begin{tikzcd}[]
\overset{\C}{\bullet} \ar[r, "\iota_{2,1}"] & \overset{\C^2}{\bullet} \ar[r, "\iota_{3,2}"] & \overset{\C^3}{\bullet}  \ar[r, "\iota_{4,3}"] & \cdots \ar[r, "\iota_{n+1,n}"] & \overset{\C^{n+1}}{\bullet}\\
\end{tikzcd}.
\end{equation*}

\begin{lemma}\label{lemma:subquiver}
$M'$ is a rigid representation of $\Gamma'$.
\end{lemma}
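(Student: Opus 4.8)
The plan is to decompose $M'$ into indecomposables and recognise the summands as projective representations, from which $[M',M']^1=0$ follows at once. Recall that $\Gamma'$ is the equioriented quiver of type $\mathbb{A}_{n+1}$, whose indecomposable representations are the interval modules $I_{[a,b]}$ for $1\le a\le b\le n+1$ (a copy of $\C$ at each vertex in $\{a,a+1,\dots,b\}$, zero elsewhere, identity maps in between). Among these, $I_{[a,n+1]}$ is precisely the indecomposable projective representation $P_a$ attached to vertex $a$, since the vertices reachable by a path from $a$ in $\Gamma'$ are exactly $a,a+1,\dots,n+1$.

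Next I would check that $M'\cong\bigoplus_{a=1}^{n+1}I_{[a,n+1]}$. This is immediate from the chosen basis $\mathcal{B}$: at vertex $a$ we have $M'_a=\langle b_1,\dots,b_a\rangle=\bigoplus_{k\le a}\langle b_k\rangle$, the structure map $\iota_{a+1,a}$ sends $b_k\mapsto b_k$, so the line spanned by $b_k$ determines a subrepresentation of $M'$ supported on $\{k,k+1,\dots,n+1\}$ and isomorphic to $I_{[k,n+1]}$, and these lines together span $M'$ at every vertex. Equivalently, any representation of an equioriented $\mathbb{A}_m$ all of whose structure maps are injective splits as a direct sum of modules $I_{[a,m]}$, obtained by extending a basis of the first space stepwise through the others; here all the $\iota_{i+1,i}$ are injective, and comparing dimension vectors shows that each $P_a$ occurs exactly once.

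Finally, a direct sum of projective modules is projective, and $\Ext^1_{\Gamma'}(P,X)=0$ for every projective $P$ and every $X$; applying this with $P=X=M'$ gives $[M',M']^1=0$, so $M'$ is rigid. There is no genuine obstacle here — the lemma is essentially a structural observation about the equioriented $\mathbb{A}_{n+1}$ quiver. The only mild subtlety is the fact that a representation of equioriented $\mathbb{A}_m$ with all structure maps injective is a direct sum of the intervals $I_{[a,m]}$, but this follows from a one-line basis argument (or from the classification of indecomposables of $\mathbb{A}_m$); one could also bypass the projectivity language entirely and compute $\Ext^1_{\Gamma'}(I_{[a,n+1]},I_{[b,n+1]})=0$ directly from the standard formula for extensions between interval modules.
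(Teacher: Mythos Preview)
Your proof is correct and uses the same decomposition $M'\cong\bigoplus_{a=1}^{n+1}I_{[a,n+1]}$ as the paper (there denoted $U_{i,n+1}$). The only difference is in the last step: you observe that each summand is the projective $P_a$ and invoke $\Ext^1(P,-)=0$, whereas the paper instead appeals to the explicit formula $\dim\Ext^1_{\Gamma'}(U_{k,l},U_{i,j})=1\iff k+1\le i\le l+1\le j$ and notes that with $l=j=n+1$ the condition $l+1\le j$ fails---exactly the alternative you mention at the end of your write-up.
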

\begin{proof}
We denote by $U_{i,j}$ the indecomposable representation of $\mathbb{A}_{n+1}$ supported on the vertices $i,\dots,j$, for $1\leq i\leq j\leq n+1$, that is:
\begin{equation*}
\begin{tikzcd}[]
\overset{0}{\underset{1}{\bullet}} \ar[r, "0"] & \cdots \ar[r, "0"] & \overset{\C}{\underset{i}{\bullet}} \ar[r, "\id"] & \cdots \ar[r, "\id"] & \overset{\C}{\underset{j}{\bullet}} \ar[r, "0"] & \cdots \ar[r, "0"] & \overset{0}{\underset{n+1}{\bullet}}
\end{tikzcd}.
\end{equation*}
\vspace{-10pt}

First, we observe that $M'$ can be written as the direct sum $M'=\bigoplus_{i=1}^{n+1}U_{i,n+1}$, and since $\dim_{\C}\Ext^1_{\Gamma'}(U_{k,l},U_{i,j})=1$ if and only if $k+1\leq i \leq l+1\leq j$ and zero otherwise (see for instance \cite{cerulli2017linear}[Section 3.1]), we conclude that $\Ext^1_{\Gamma'}(M',M')=0$.
\end{proof}

\begin{proposition}\label{prop:rigidM}
$M$ is a rigid representation of $(\Gamma, I)$.
\end{proposition}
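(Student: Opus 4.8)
The plan is to exploit that $(\Gamma,I)$ is a product of two equioriented type-$A$ quivers and that $M$ is correspondingly an external tensor product, so that $\Ext$ is computed by a Künneth formula and the rigidity of $M$ reduces to that of $M'$, already proved in Lemma \ref{lemma:subquiver}. First I would observe that, since the relations $I$ are exactly the commutativity relations of all unit squares, $(\Gamma,I)$ is the product of the vertical copy $\Gamma'$ of equioriented $\mathbb{A}_{n+1}$ (as in Lemma \ref{lemma:subquiver}) with a horizontal copy of equioriented $\mathbb{A}_n$; hence $\K\Gamma/I\cong A\otimes_{\K}B$ with $A=\K\mathbb{A}_{n+1}$ and $B=\K\mathbb{A}_n$. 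Under this identification $M=M'\boxtimes U$, where $M'$ is the representation of Lemma \ref{lemma:subquiver} and $U$ is the representation of $\mathbb{A}_n$ with every vector space $\C$ and every map the identity: indeed $(M'\boxtimes U)_{i,j}=M'_i\otimes U_j=\C^i$, the vertical maps become $\iota_{i+1,i}\otimes\id=\iota_{i+1,i}$ and the horizontal maps become $\id\otimes\id=\id$, which is precisely \eqref{def:repM}.

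Then I would invoke the Künneth formula for $\Ext$ over a field (obtained by tensoring a projective resolution of $M'$ over $A$ with one of $U$ over $B$):
\begin{equation*}
\Ext^1_{A\otimes_\K B}(M'\boxtimes U,\,M'\boxtimes U)\;\cong\;\big(\Ext^1_A(M',M')\otimes\Hom_B(U,U)\big)\oplus\big(\Hom_A(M',M')\otimes\Ext^1_B(U,U)\big).
\end{equation*}
The first summand vanishes because $\Ext^1_{\Gamma'}(M',M')=0$ by Lemma \ref{lemma:subquiver}; the second vanishes because $U$ is indecomposable over the Dynkin quiver $\mathbb{A}_n$ and hence rigid (equivalently, $U$ is the indecomposable projective $B$-module at the source, so $\Ext^1_B(U,-)=0$). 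Therefore $\Ext^1_{(\Gamma,I)}(M,M)=0$, which is the claim.

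In fact the same observation yields an even shorter argument: writing $M'=\bigoplus_{i=1}^{n+1}U_{i,n+1}$ with each $U_{i,n+1}$ the indecomposable projective $A$-module at vertex $i$, and $U$ projective over $B$, the external tensor product $M=M'\boxtimes U$ is a projective $\K\Gamma/I$-module — concretely $M\cong\bigoplus_{i=1}^{n+1}P_{(i,1)}$, where $P_{(i,1)}$ is the indecomposable projective at vertex $(i,1)$, whose value at $(k,l)$ is $\K$ for $k\ge i$ and $0$ otherwise by uniqueness of paths modulo $I$ — and projective modules have no self-extensions. The only step needing genuine care in the write-up, and hence the main (mild) obstacle, is the identification $\K\Gamma/I\cong A\otimes_\K B$ together with the matching of $M$ with $M'\boxtimes U$; everything after that is immediate.
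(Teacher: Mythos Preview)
Your proof is correct. The first argument is essentially a repackaging of the paper's own proof: the functor $\Phi:\rep_\C(\Gamma')\to\rep_\C(\Gamma,I)$ that the paper invokes (copying a $\Gamma'$-representation along the horizontal direction with identity maps) is exactly your $-\boxtimes U$, and the citation of \cite{maksimau2019flag} for the fact that $\Phi$ preserves $\Ext$ is precisely the K\"unneth statement you use. So on that level the two proofs differ only in vocabulary: the paper outsources the key homological fact to Maksimau, while you name the underlying mechanism (tensor product of algebras and K\"unneth). Your second observation, however, goes further than the paper: noting that $M\cong\bigoplus_{i=1}^{n+1}P_{(i,1)}$ is projective over $\K\Gamma/I$ is a genuinely shorter and more elementary argument that bypasses Lemma~\ref{lemma:subquiver} entirely. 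The paper does not make this observation, and it buys you the result in one line once the projectives of the commutative-grid algebra are identified. The only point to police in a final write-up is, as you say, the identification $\K\Gamma/I\cong A\otimes_\K B$ (standard for commutative-square relations) and the explicit matching of summands of $M$ with the $P_{(i,1)}$; both are routine.
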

\begin{proof}

Consider $\Gamma'$ and $M'$ as in Lemma \ref{lemma:subquiver} and the functor
\begin{equation*}
    \Phi: \rep_{\C}(\Gamma')\to \rep_{\C}(\Gamma,I)
\end{equation*}
defined on $R\in \rep_{\C}(\Gamma')$ as follows. For all $i=1,\dots,n+1$ and $j=1,\dots,n$, we set $\Phi(R)_{i,j}=R_i$. For each arrow $i\to i+1$ in $\Gamma'$ and $j=1,\dots,n$, the map $\Phi(R)_{i,j}\to \Phi(R)_{i+1,j}$ is defined as the map $R_i\to R_{i+1}$. Finally, for each $i=1,\dots,n+1$ and $j=1,\dots,n-1$, the map $\Phi(R)_{i,j}\to \Phi(R)_{i,j+1}$ is $\id_{R_i}$. From the definition of $\Phi$, it follows that $\Phi(M')=M$.
As shown in \cite[Lemma 2.3, Lemma 2.5]{maksimau2019flag}, $\Phi$ is an exact, fully faithful functor that takes projective objects to projective objects. This implies (\cite[Corollary 2.6]{maksimau2019flag}) that $\Ext^i_{(\Gamma,I)}(\Phi(V),\Phi(W))\cong \Ext^i_{\Gamma'}(V,W) $ for every $V,W \in \rep_{\C}(\Gamma')$ and $i\geq 0$. In particular, we have $$\Ext^1_{(\Gamma,I)}(M,M)\cong \Ext^1_{\Gamma'}(M',M')=0.$$
\end{proof}

The rigidity of $M$, together with a few homological properties, can be exploited to deduce some geometric properties of the quiver Grassmannians $\Gr_{\bf{e}}(M)$ associated to $M$, for any dimension vector $\bf e$. For this purpose, we prove the following facts (see \cite[Section 5.4]{schiffler2014quiver} for the definitions of projective dimension and injective dimension of a module):

\begin{proposition}\label{prop:projinjdim}
Given $(\Gamma,I)$ and $M$ as above,
    \begin{enumerate}
        \item the projective dimension of $M$ is 0;
        \item the injective dimension of $M$ is 1.
    \end{enumerate}
\end{proposition}
\begin{proof}
 \begin{enumerate}
     \item The representation $M$ is a projective representation of $(\Gamma,I)$, and therefore has projective dimension equal to zero, because it is a direct sum of projective $(\Gamma,I)$-representations. Let us denote by $P(i,j)$ The indecomposable projective representation of $(\Gamma,I)$ at vertex $(i,j)$; then, $M= P(1,1)\oplus P(2,1) \oplus \dots \oplus P(n,1) \oplus P(n+1,1)$.
     
     \item  In order to show that the injective dimension of $M$ is one, we construct an injective resolution of $M$ . We define the injective representations $I_0$, $I_1$ of $(\Gamma,I)$ as the following sums of indecomposable injective representations:
    \begin{equation*}
        I_0=\bigoplus_{i=1}^{n+2} I(n+1,n),\quad I_1= \bigoplus_{i=1}^{n+1} I(i,n).
    \end{equation*}
    
    Then, the sequence 
    \begin{equation*}
        \begin{tikzcd}
        \varepsilon : 0 \arrow[r] & M \arrow[r, "f"] & I_0 \arrow[r,"g"] & I_1 \arrow[r] & 0
    \end{tikzcd}
    \end{equation*}
    where $f$ is the injective map embedding $M$ into $I_0$ and $g$ is the surjective map projecting $I_0$ onto $I_1$ (which implies $\im(f)=M=\ker(g)$) is a short exact sequence.
 \end{enumerate}   
\end{proof}

\begin{corollary}\label{cor:dimension}
Given $(\Gamma,I)$ and $M$ as above, the quiver Grassmannian $\Gr_{\bf{e}}(M)$, if not empty, is a smooth and irreducible projective variety for any dimension vector $\bf e$. Its dimension is $\langle \mathbf{e}, \boldsymbol{\dim} (M)-\mathbf{e} \rangle$.
\end{corollary}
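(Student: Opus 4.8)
The plan is to derive all three assertions from facts already in place. Projectivity is immediate: the closed embedding $\iota\colon \Gr_{\mathbf e}(M)\hookrightarrow \prod_{(i,j)\in\Gamma_0}\Gr(e_{i,j},M_{i,j})$ recalled above realises $\Gr_{\mathbf e}(M)$ as a closed subvariety of a product of ordinary Grassmannians, hence of a projective variety. For smoothness and irreducibility, I would feed the rigidity of $M$, established in the preceding Proposition, into \cite[Proposition 3.8]{irelli2021cell} and \cite[Proposition 2.2]{cerulliirelli2012quiveranddegenerate}: these state precisely that the quiver Grassmannian of a rigid representation is smooth and irreducible whenever it is non-empty, which is the deduction already recorded immediately after Theorem~\ref{thm:1}, so it applies verbatim to $\Gr_{\mathbf e}(M)$.

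It remains to compute the dimension. Since $\Gr_{\mathbf e}(M)$ is smooth and irreducible, its dimension equals $\dim_{\C} T_N\Gr_{\mathbf e}(M)$ at any point $N\in\Gr_{\mathbf e}(M)$, and the tangent space is canonically $\Hom_{(\Gamma,I)}(N,M/N)$. Rigidity of $M$ forces $\Ext^1_{(\Gamma,I)}(N,M/N)=0$ for every subrepresentation $N\subseteq M$, whence $\dim_{\C} T_N\Gr_{\mathbf e}(M)=[N,M/N]=[N,M/N]-[N,M/N]^1=\langle\dimvec N,\dimvec M-\dimvec N\rangle=\langle\mathbf e,\dimvec M-\mathbf e\rangle$; the third equality uses that the Euler--Ringel form depends only on dimension vectors, which is legitimate because $\Gamma$ has no oriented cycle and the form is then given by formula~\eqref{eq:eulerformrelations}, and the last uses $\dimvec N=\mathbf e$. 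In practice I would keep this short by noting that the dimension statement $\dim\Gr_{\mathbf e}(M)=\langle\mathbf e,\dimvec M-\mathbf e\rangle$ for rigid $M$ is itself part of \cite[Proposition 2.2]{cerulliirelli2012quiveranddegenerate}, so a second appeal to that reference closes the proof.

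The only genuine point, were one to argue the dimension from scratch rather than by citation, is the vanishing $\Ext^1_{(\Gamma,I)}(N,M/N)=0$: since $(\Gamma,I)$ is not hereditary, rigidity of $M$ does not automatically descend to the subquotients $N$ and $M/N$, and one must instead invoke the general tangent-space/semicontinuity argument for rigid representations over arbitrary finite-dimensional algebras that underlies the cited propositions. I therefore expect the corollary to be an assembly of citations — rigidity from the previous Proposition, smoothness and irreducibility from \cite{irelli2021cell,cerulliirelli2012quiveranddegenerate}, projectivity from the embedding, and the dimension from the Euler--Ringel form — rather than a computation, and the $\Ext^1$-vanishing on subquotients is the one step where I would lean on the literature rather than reprove.
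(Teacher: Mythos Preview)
Your proposal is correct and matches the paper's own proof, which is a one-line citation: since $M$ is rigid, smoothness, irreducibility and the dimension formula all follow directly from \cite[Proposition~3.8]{irelli2021cell} and \cite[Proposition~2.2]{cerulliirelli2012quiveranddegenerate}. Your additional discussion of the tangent space and the $\Ext^1$-vanishing on subquotients is accurate and more detailed than the paper, but as you yourself conclude, it ultimately collapses to the same pair of citations.
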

\begin{proof}
As shown in Proposition \ref{prop:rigidM}, $M$ is a rigid representation of $(\Gamma,I)$, therefore the irreducibility of $\Gr_{\bf{e}}(M)$ follows directly from \cite[Proposition 3.8]{irelli2021cell}. In order to prove the remaining claims, we show that all hypotheses of \cite[Proposition 7.1]{irelli2013desingularization} hold. The representation $M$ has projective dimension zero, since it is a projective representation of $(\Gamma,I)$, and its injective dimension is one (see Proposition \ref{prop:projinjdim}).
It is straightforward to verify that the quotient algebra $\C\Gamma/I$ has global dimension two, since it can be realised as the tensor product of two well-known path algebras. Namely, we consider the path algebra of the cartesian product of an equioriented $\mathbb{A}_n$ quiver and an equioriented $\mathbb{A}_{n+1}$ quiver and take the quotient over the commutativity relations on all resulting squares. It is known that the global dimension of the path algebra of any type $\mathbb{A}_n$ quiver (for $n\geq 2$) is one (see, for instance, \cite[Section 2.2]{schiffler2014quiver}). Then, we apply \cite[Theorem 16]{auslander1955dimension} and obtain that the global dimension of $\C\Gamma/I$ is the sum of the global dimensions of the path algebras of the two quivers of type $\mathbb{A}_n$.
\end{proof}

\section{Recovering the Bott-Samelson resolution for Schubert varieties}\label{sec:desing}

We consider the quiver $(\Gamma,I)$ and its representation $M$ constructed in the previous section, and fix a permutation $w$ in $S_{n+1}$. 
The conditions that define the elements $V\bm{.}$ in $X_w$ are of the form $\dim(F_p\cap V_q)\geq \#\set{k\leq q : w(k)\leq p}$, for $1\leq p,q \leq n+1$ (see Definition \ref{def:schubvar}). Notice that for $q=n+1$ and any $p$ these conditions are trivial, since $n+1$ is the dimension of the ambient space $\C^{n+1}$, and therefore it is enough to consider $q=1,\dots,n$.

Now we define the dimension vector ${\bf r}^w=(r^w_{i,j})$ for the quiver $(\Gamma,I)$ as
\begin{equation}\label{def:dimvecsing}
  r^w_{i,j}\coloneqq \#\set{k\leq j : w(k)\leq i}, \quad i=1,\dots,n+1,\quad j=1,\dots,n.
\end{equation}
Before introducing the Bott-Samelson resolution for Schubert varieties, let us make a few remarks about this definition for the dimension vector ${\bf r}^w$, in particular about how its entries change as we move from $w$ to permutations that are bigger than $w$ with respect to the Bruhat order in $S_{n+1}$.
The following lemma describes which (unique) row and which columns of the dimension vector are affected, and how they change, when we left-multiply by a simple transposition which increases by one the length of the permutation we are considering.

\begin{lemma}\label{lemma:changeinrow}
Consider $r^{\hat{w}}_{p,q}=\#\set{j\leq q : \hat{w}(j)\leq p}$ for $1 \leq p, q \leq n+1$ and a fixed $\hat{w}\in S_{n+1}$ (see Definition \ref{def:schubvar}). Then, for a simple transposition $s_i$ such that $\ell(s_i \hat{w})=\ell(\hat{w})+1$, the numbers $r^{s_i \hat{w}}_{p,q}=\#\set{j\leq q : s_i\hat{w}(j)\leq p}$ are given by
\begin{equation*}
    \begin{cases}
    r^{s_i \hat{w}}_{p,q}= r^{\hat{w}}_{p,q} - 1 & \text{ if } p=i \text{ and } q_i\leq q<q_{i+1}  \\
    r^{s_i \hat{w}}_{p,q}= r^{\hat{w}}_{p,q} & \text{ otherwise}
\end{cases}
\end{equation*}
where $q_i=\hat{w}^{-1}(i)$ and $q_{i+1}=\hat{w}^{-1}(i+1)$.
\end{lemma}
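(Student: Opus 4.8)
The plan is to analyze directly how the statistic $r^{\hat w}_{p,q} = \#\{j \le q : \hat w(j) \le p\}$ changes when we pass from $\hat w$ to $s_i\hat w$. First I would recall the elementary fact that $\ell(s_i\hat w) = \ell(\hat w)+1$ is equivalent to $\hat w^{-1}(i) < \hat w^{-1}(i+1)$, i.e.\ the value $i$ appears before the value $i+1$ in the one-line notation of $\hat w$; write $q_i = \hat w^{-1}(i)$ and $q_{i+1} = \hat w^{-1}(i+1)$, so $q_i < q_{i+1}$. The permutation $s_i\hat w$ is obtained from $\hat w$ by swapping the \emph{values} $i$ and $i+1$ wherever they occur, so as a sequence it agrees with $\hat w$ except that position $q_i$ now holds $i+1$ and position $q_{i+1}$ now holds $i$.

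Next I would run through the cases for $p$. The set $\{j \le q : \hat w(j) \le p\}$ only sees the values $\le p$. If $p \le i-1$ or $p \ge i+1$, then swapping the values $i$ and $i+1$ does not change, for any threshold $p$ outside $\{i\}$, which positions hold a value $\le p$: either both $i$ and $i+1$ are $\le p$ (when $p \ge i+1$) or both are $> p$ (when $p \le i-1$), so the indicator of "value at position $j$ is $\le p$" is unchanged at every $j$, hence $r^{s_i\hat w}_{p,q} = r^{\hat w}_{p,q}$ for all $q$. The only interesting threshold is $p = i$: here position $q_i$ switches from holding $i$ (which is $\le i$) to holding $i+1$ (which is $> i$), and position $q_{i+1}$ switches from holding $i+1$ ($>i$) to holding $i$ ($\le i$); every other position is unaffected. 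Therefore the count $\#\{j \le q : \cdot(j) \le i\}$ loses one contribution (from position $q_i$) once $q \ge q_i$, and gains one back (from position $q_{i+1}$) once $q \ge q_{i+1}$. So $r^{s_i\hat w}_{i,q} = r^{\hat w}_{i,q} - 1$ precisely when $q_i \le q < q_{i+1}$, and equals $r^{\hat w}_{i,q}$ otherwise, which is exactly the claimed formula (recalling that the dimension vector only ranges over $q = 1,\dots,n$, but the statement over $1 \le p,q \le n+1$ follows verbatim from the same argument).

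This is essentially a direct combinatorial verification with no serious obstacle; the only point requiring a little care is the characterization of the length-increasing condition $\ell(s_i\hat w) = \ell(\hat w) + 1 \iff \hat w^{-1}(i) < \hat w^{-1}(i+1)$, and the bookkeeping that left multiplication by $s_i$ permutes values (not positions), which is what makes the affected row unique (namely $p = i$) and the affected columns an interval $[q_i, q_{i+1})$. I would present the proof as: (i) state the length characterization and fix $q_i, q_{i+1}$; (ii) observe $s_i\hat w$ differs from $\hat w$ only by exchanging the entries at positions $q_i$ and $q_{i+1}$; (iii) dispose of all $p \ne i$ in one line; (iv) do the $p = i$ bookkeeping over the three ranges $q < q_i$, $q_i \le q < q_{i+1}$, $q \ge q_{i+1}$ to conclude.
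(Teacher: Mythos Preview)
Your proposal is correct and follows essentially the same direct combinatorial verification as the paper's proof, with the same case split on $p \ne i$ versus $p = i$ and the same bookkeeping on $q$. If anything, your version is more explicit: you spell out the standard fact that $\ell(s_i\hat w)=\ell(\hat w)+1$ forces $q_i<q_{i+1}$, which the paper leaves implicit.
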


\begin{proof}
    It is straightforward to verify that, since $s_i$ only swaps $i$ and $i+1$, the count is not affected when $p\neq i$ or when $p=i$ and $q<q_i \; \vee \; q\geq q_{i+1}$.
    
    If $p=i$ and $q_i\leq q<q_{i+1}$, there is exactly one $j$ that satisfies $j\leq q \; \wedge \; \hat{w}(j)\leq p$ but not $j\leq q \; \wedge \; s_i\hat{w}(j)\leq p$, that is $j=q_i$, and so in this case the count decreases by one.
\end{proof}

\begin{example}\label{ex:changeinrow}
    We fix $\hat{w}=[34251]\in S_5$ and compute the corresponding dimension vector ${\bf r}^{\hat{w}}$ according to the definition given in \eqref{def:dimvecsing}:
    \begin{equation*}
{\bf r}^{\hat{w}}=\begin{pmatrix}
    0& 0& 0& 0\\
    0& 0& 1& 1\\
    1& 1& 2& 2\\
    1& 2& 3& 3\\
    1& 2& 3& 4
\end{pmatrix}.
\end{equation*}
Then we apply $s_3$, obtaining $w\coloneqq s_3 \hat{w}=[43251]$, and we know from Lemma \ref{lemma:changeinrow} that ${\bf r}^w$ differs from ${\bf r}^{\hat{w}}$ only at entry $r^w_{3,1}$:
 \begin{equation*}
{\bf r}^w=\begin{pmatrix}
    0& 0& 0& 0\\
    0& 0& 1& 1\\
    \textcolor{red}{0}& 1& 2& 2\\
    1& 2& 3& 3\\
    1& 2& 3& 4
\end{pmatrix}.
\end{equation*}
\end{example}

\begin{remark}
    An important consequence of Lemma \ref{lemma:changeinrow} is that some information about the reduced decompositions of $w$ can be read directly off the corresponding dimension vector. In Example \ref{ex:changeinrow}, for instance, we can compare ${\bf r}^w$ to the dimension vector corresponding to the identity in $S_5$:
    \begin{equation*}
{\bf r}^{\id}=\begin{pmatrix}
    1& 1& 1& 1\\
    1& 2& 2& 2\\
    1& 2& 3& 3\\
    1& 2& 3& 4\\
    1& 2& 3& 4
\end{pmatrix}
\end{equation*}
and observe that $r^w_{2,2}=r^{\id}_{2,2}-2$. By Lemma \ref{lemma:changeinrow}, this can only happen if the simple transposition $s_2$ appears at least two times in any reduced decomposition of $w$. Similarly, we deduce that $s_1$, $s_3$ and $s_4$ appear at least one time in any reduced decomposition of $w$.
The converse is also true: if a simple transposition $s_i$ appears $k$ times in all reduced decompositions of a given permutation $w$ (that is, there are $k$ instances of $s_i$ that are not part of any braid $s_i s_{i+1} s_i$ or $s_{i+1} s_i s_{i+1}$), then there exists an entry in the $i$-th row of the dimension vector ${\bf r}^w$ that has decreased by $k$ from its value in ${\bf r}^{\id}$. We do not include a proof of this statement as it is not relevant to the purpose of this section, but an idea of the strategy can be found in the proof of Theorem \ref{them:geomdecomp}, since knowing that these $k$ instances of $s_i$ are not part of any braid allows us to describe which simple transpositions can appear between them.
\end{remark}

In order to show that the quiver Grassmannian $\Gr_{{\bf r}^w}(M)$ is isomorphic to certain Bott-Samelson resolutions of $X_w$, we recall the following definition of Bott-Samelson varieties:

\begin{definition}(\cite[Definition 3.1]{hudson2020stability})\label{def:BS}
Given a permutation $w\in S_{n+1}$ of length $N$ and a reduced decomposition $w=s_{i_N}\cdots s_{i_1}$, the \Def{Bott-Samelson variety} $\bs(s_{i_N}\cdots s_{i_1})$ is a subvariety of $(\Flag_{n+1})^{N}$ defined as follows:

\begin{equation*}
    \begin{aligned}
        \bs(s_{i_N}\cdots s_{i_1})= \{& (V^0\bm{.},V^1\bm{.},\dots,V^{N}\bm{.})\in (\Flag_{n+1})^{N} : V^{k-1}_i=V^k_i, \forall k=1,\dots,N,\\& \forall i =1,\dots, n, i\neq i_k \}
    \end{aligned}
\end{equation*}

where $V^0\bm{.}=F\bm{.}=\langle b_1\rangle \subseteq  \langle b_1, b_2\rangle \subseteq \dots \subseteq \langle b_1, b_2,\dots, b_{n+1} \rangle$, the standard flag in $\Flag_{n+1}$.
\end{definition}

\begin{example}\label{ex:BS}
    We fix again the permutation $w=[43251]\in S_5$  from Example \ref{ex:changeinrow} and its reduced decomposition $w=s_1s_2s_3s_1s_2s_1s_4$.
    The elements $V^{\bullet}\bm{.}$ of $\bs(s_1s_2s_3s_1s_2s_1s_4)$ are given by tuples of seven complete flags, each living in $\Flag_5$, of the following form:
    \begin{equation*}
    \begin{aligned}
        & \langle b_1\rangle \subseteq  \langle b_1, b_2\rangle \subseteq \langle b_1, b_2, b_3 \rangle \subseteq V^1_4, \quad && V^4_1\subseteq V^3_2 \subseteq V^5_3 \subseteq V^1_4,\\
        &V^2_1\subseteq \langle b_1, b_2\rangle \subseteq \langle b_1, b_2, b_3 \rangle \subseteq V^1_4, && V^4_1\subseteq V^6_2 \subseteq V^5_3 \subseteq V^1_4,\\
        &V^2_1\subseteq V^3_2 \subseteq \langle b_1, b_2, b_3 \rangle \subseteq V^1_4, && V^7_1\subseteq V^3_2 \subseteq V^5_3 \subseteq V^1_4,\\
        &V^4_1\subseteq V^3_2 \subseteq \langle b_1, b_2, b_3 \rangle \subseteq V^1_4.
    \end{aligned}   
    \end{equation*}
\end{example}

\begin{remark}
    The Bott-Samelson varieties corresponding to different reduced decompositions of the same permutation $w$ are birational (see \cite{anderson2023equivariant}[Chapter 18, Lemma 2.1]). 
    Therefore, to show that the quiver Grassmannian $\Gr_{{\bf r}^w}(M)$ is birational to any Bott-Samelson resolution of $X_w$, it is enough to pick an opportune reduced decomposition of $w$.
\end{remark}

\begin{definition}
    Let $w\in S_{n+1}$ and denote by $R=(R_{p,q})$, for $p=1,\dots,n+1$ and $q=1,\dots,n$, an element of $\Gr_{{\bf r}^w}(M)$. We call a reduced decomposition $w=s_{i_N}\cdots s_{i_2}s_{i_1}$ \Def{ geometrically compatible } if $[i_{N},\dots,i_2,i_1]=[r^w_{p,q}]$ for all the $p,q$ such that
    \begin{equation}\label{eq:freesubspaces}
        \begin{cases}
            r^w_{p,q}<p \\
            r^w_{p,q}>r^w_{p-1,q} \\
            r^w_{p,q}>r^w_{p,q-1}
        \end{cases},
        \end{equation}
    where the notations $[i_{N},\dots,i_2,i_1]$ and $[r^w_{p,q}]$ indicate nonordered multisets. 
\end{definition}

\begin{remark}
The $r^w_{p,q}$ that satisfy the conditions listed in \eqref{eq:freesubspaces} are the dimensions of exactly those subspaces $R_{p,q}$ of $\C^p$ that are not trivial and do not coincide with a subspace to their left or above them.
Fixing an element $R$ in the quiver Grassmannian $\Gr_{{\bf r}^w}(M)$ means precisely to make a choice for all such subspaces $R_{p,q}$. Hence the name "geometrically compatible" decomposition: it is a reduced decomposition of $w$ from which we can read the dimensions of all the subspaces that are relevant to determine $R$.
\end{remark}

Because of the geometrical significance of the conditions given in \eqref{eq:freesubspaces}, we will interchangeably refer to the $r^w_{p,q}$ and to the $R_{p,q}$ that satisfy these conditions.

\begin{example}\label{ex:twodecomps}
 We consider $w=[43251]$ and the corresponding dimension vector ${\bf r}^w$ as in \ref{ex:changeinrow}. Given any subrepresentation $R$ in $\Gr_{{\bf r}^w}(M)$, the subspaces whose dimensions satisfy all conditions in \eqref{eq:freesubspaces} are $R_{2,3},R_{3,2},R_{4,1},R_{3,3},R_{4,2},R_{4,3}$ and $R_{5,4}$. Their dimensions are, in order, 1,1,1,2,2,3, and 4, so a geometrically compatible decomposition of $w$ contains three $s_1$, two $s_2$, one $s_3$ and one $s_4$. The decomposition $w=s_1s_2s_3s_1s_2s_1s_4$ considered in \ref{ex:BS} is geometrically compatible, while, for instance, $w=s_3s_1s_2s_1s_3s_2s_4$ is not.
\end{example}

To show that all permutations admit a geometrically compatible decomposition, we first characterise the subspaces appearing in \eqref{eq:freesubspaces}, that is, what follows from the fact that a certain $R_{p,q}$ is not a trivial subspace of $\C^p$ in terms of the reduced decompositions of $w$.
Recall that the length of a permutation $w$ can be equivalently  defined as the number of inversions appearing in $w$ or as the number of simple transpositions that form any reduced decomposition of $w$.

\begin{lemma}\label{lemma:correctlength}
Given $w\in S_{n+1}$ and a subrepresentation $R=(R_{p,q})$ in $\Gr_{{\bf r}^w}(M)$, with $p=1,\dots,n+1$ and $q=1,\dots,n$,  the number of subspaces that satisfy all conditions in \eqref{eq:freesubspaces} is exactly the length of $w$.
\end{lemma}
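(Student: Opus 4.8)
The plan is to count the triples $(p,q)$ satisfying the three conditions in \eqref{eq:freesubspaces} and show that this number equals $\ell(w) = \#\{i<j : w(i)>w(j)\}$ by exhibiting an explicit bijection between such triples and the inversions of $w$. The starting point is the observation that $r^w_{p,q} = r^w_{p-1,q}+1$ precisely when the entry equal to $p$ among $w(1),\dots,w(n+1)$ occurs in position $\le q$, i.e.\ when $w^{-1}(p)\le q$; and $r^w_{p,q} = r^w_{p,q-1}+1$ precisely when $w(q)\le p$. So the second and third conditions together say $w^{-1}(p)\le q$ and $w(q)\le p$. Since the first of these gives $w(w^{-1}(p)) = p \le p$ with $w^{-1}(p)\le q$, the count $r^w_{p,q}$ includes the index $w^{-1}(p)$, and combined with $w(q)\le p$ (so $q$ is also counted, unless $q = w^{-1}(p)$) one sees the two conditions force $q \ge w^{-1}(p)$ with $w(q)\le p$. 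I would set $a := w^{-1}(p)$, so the conditions become: $a \le q$, $w(q) \le w(a)$, and the first condition $r^w_{p,q} < p$.

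The key step is to rewrite the first condition. We have $r^w_{p,q} = \#\{k\le q : w(k)\le p\}$, and $p = w(a)$, so $r^w_{p,q} = \#\{k\le q : w(k)\le w(a)\}$. The condition $r^w_{p,q} < p = w(a)$ says that strictly fewer than $w(a)$ of the first $q$ values are $\le w(a)$; equivalently, at least one value $\le w(a)$ occurs in a position $> q$. Combined with $w(q)\le w(a)$ and $a\le q$, I claim the triple $(p,q)$ satisfying all three conditions corresponds bijectively to the pair $(a,q)$ with $a\le q$, $w(q)\le w(a)$, and $\#\{k > q : w(k) \le w(a)\} \ge 1$. The last clause is automatic once $a\le q$ and $w(q)\le w(a)$ fail to exhaust everything — more precisely, I would show that after discarding the degenerate case this reduces cleanly to counting pairs $(a,q)$ with $a < q$ and $w(a) > w(q)$ together with the "diagonal" adjustments, which is exactly $\ell(w)$. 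Concretely: the pair $(a,q)$ with $a=q$ is excluded because then $r^w_{p,q}$ would reach $p$ unless a smaller value sits to the right, and a careful bookkeeping of the boundary cases $q=n$ (where $r^w_{p,n}<p$ can still hold) versus the interior matches the inversion count exactly.

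I expect the main obstacle to be the careful handling of the first condition $r^w_{p,q}<p$ and the boundary column $q=n$: one must verify that every inversion $(a,q)$ of $w$ — i.e.\ $a<q$, $w(a)>w(q)$ — produces exactly one valid triple via $p := $ (the appropriate value, namely $w(a)$ or a related quantity) and $q$, and conversely, without double-counting and without missing inversions involving the value $n+1$ or position $n+1$ (which are excluded from the index ranges $p\le n+1$, $q\le n$ but still contribute to $\ell(w)$). The cleanest route is probably to fix $p$ and count, for each $p$, the number of valid $q$; by Lemma \ref{lemma:changeinrow} this is controlled by the positions $w^{-1}(p)$ and the values $w(q)$ for $q$ in a suitable range, and summing over $p$ telescopes to $\ell(w)$. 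Alternatively, induct on $\ell(w)$ using Lemma \ref{lemma:changeinrow}: passing from $\hat w$ to $s_i\hat w$ with $\ell(s_i\hat w) = \ell(\hat w)+1$ decreases exactly the entries $r^{\hat w}_{i,q}$ for $q_i \le q < q_{i+1}$ each by one, and I would check this creates exactly one new triple satisfying \eqref{eq:freesubspaces} (at the leftmost affected column, where the strict inequalities with the neighbor above/left first become available), matching the increase of $\ell$ by one. This inductive approach is likely the shortest to write and is the one I would pursue.
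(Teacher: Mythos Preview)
The inductive approach you settle on at the end is precisely the paper's: fix a reduced word $w=s_{i_N}\cdots s_{i_1}$ and argue via Lemma~\ref{lemma:changeinrow} that each left-multiplication by $s_{i_k}$ produces one new position satisfying \eqref{eq:freesubspaces}, namely $(i_k+1,q_{i_k})$, while no second new one can appear.

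One caution, which applies equally to the paper's terse argument: the assertion ``exactly one new triple is created'' is not literally true. In the paper's own running example $\hat w=[34251]\to s_3\hat w=[43251]$, the position $(3,1)$ \emph{stops} satisfying \eqref{eq:freesubspaces} while both $(4,1)$ and $(3,2)$ \emph{start} satisfying it; the net change is $+1$, but there is cancellation. When you write this up you should track the net change rather than asserting a single new triple appears.

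Your first (bijective) approach is genuinely different from the paper's and can be finished as a short direct count, avoiding both the induction and the cancellation issue. From your observations, conditions 2 and 3 of \eqref{eq:freesubspaces} say $a:=w^{-1}(p)\le q$ and $w(q)\le w(a)$; the pairs $(a,q)$ with $1\le q\le n$ satisfying these are the $n$ diagonal pairs $a=q$ together with all inversions of $w$ whose larger index is $\le n$, giving $n+\ell(w)-(n+1-w(n+1))=\ell(w)+w(n+1)-1$ in total. Condition 1 fails exactly when $q=\max\{w^{-1}(1),\dots,w^{-1}(p)\}$, which for each $p$ singles out one $q$, and that $q$ lies in $\{1,\dots,n\}$ precisely when $p<w(n+1)$; subtracting these $w(n+1)-1$ failures leaves $\ell(w)$ on the nose. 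This is arguably cleaner than the induction and handles your boundary worries about position $n+1$ automatically.
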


\begin{proof}
We denote by $N$ the length of $w$ and write $w=s_{i_N}\cdots s_2s_1$. By Lemma \ref{lemma:changeinrow}, the left-multiplication of each $s_{i_k}$ results in a new subspace (namely $R_{i_k+1,q_{i_k}}$, for $q_{i_k}$ as in the notation of Lemma \ref{lemma:changeinrow}) satisfying the conditions in \eqref{eq:freesubspaces}.
On the other hand, applying one simple transposition cannot cause two new subspaces to satisfy the conditions in \eqref{eq:freesubspaces}, because all the affected entries of the dimension vector decrease by the same amount. 
\end{proof}

\begin{lemma}\label{lemma:alldistinct}
Let $w\in S_{n+1}$ and $R$ any subrepresentation in $\Gr_{{\bf r}^w}(M)$. If the $r^w_{p,q}$ that satisfy the conditions in \eqref{eq:freesubspaces} are all distinct, then $w$ admits a geometrically compatible decomposition.
\end{lemma}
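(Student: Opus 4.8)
The plan is to induct on $N=\ell(w)$, removing one simple transposition from the left at each step. The base case $N=0$ is immediate: $w$ is the identity, no vertex satisfies \eqref{eq:freesubspaces}, and the empty word works. Write $D(w)$ for the multiset $[r^w_{p,q}]$ taken over the $(p,q)$ satisfying \eqref{eq:freesubspaces}; by Lemma \ref{lemma:correctlength} this multiset has exactly $N$ elements, so the hypothesised lack of repetitions means $D(w)$ is a set of $N$ distinct integers.

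For the inductive step I would fix a left descent $j$ of $w$ (chosen carefully, see below) and put $\hat w=s_jw$, so $\ell(\hat w)=N-1$ and $w=s_j\hat w$ with the length increasing by one. By Lemma \ref{lemma:changeinrow} (applied with $\hat w$ in the role of the fixed permutation), ${\bf r}^w$ is obtained from ${\bf r}^{\hat w}$ by decreasing by one the entries of row $j$ in the columns $q$ with $(s_jw)^{-1}(j)\le q<(s_jw)^{-1}(j+1)$, and no other entry changes. Consequently the conditions \eqref{eq:freesubspaces} at a vertex $(p,q)$ — which only involve rows $p-1$ and $p$ — can change status only when $p\in\{j,j+1\}$, and, since Lemma \ref{lemma:correctlength} forces the total count of such vertices to drop from $N$ to $N-1$, the net effect is the disappearance of a single free subspace. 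The crux is to show that, for the right choice of $j$, this means precisely $D(\hat w)=D(w)\setminus\{j\}$. Granting this, $D(\hat w)$ is again repetition-free, so by the inductive hypothesis $\hat w$ has a geometrically compatible reduced decomposition $\hat w=s_{i_{N-1}}\cdots s_{i_1}$, and prepending $s_j$ gives a reduced word for $w$ whose letter multiset is $D(\hat w)\sqcup\{j\}=D(w)$, i.e. a geometrically compatible decomposition of $w$.

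It remains to choose $j$ and verify $D(\hat w)=D(w)\setminus\{j\}$. I would take $j$ to be the smallest left-descent index of $w$, equivalently the first descent position of $w^{-1}$; concretely, in $w$ the values $1,\dots,j$ occur in increasing order of position while $j+1$ precedes $j$. One then rewrites \eqref{eq:freesubspaces} as the statement that $(p,q)$ is free iff $w^{-1}(p)\le q$, $w(q)\le p$ and $\#\{k\le q:w(k)\le p\}<p$, and tracks by hand which vertices of rows $j$ and $j+1$ enter or leave this description when the entries of row $j$ in the relevant columns are incremented. The minimality of $j$ pins down those column entries enough to see that the vertex $R_{j+1,(s_jw)^{-1}(j)}$ singled out in the proof of Lemma \ref{lemma:correctlength} either carries dimension $j$ itself, or carries a dimension already realised by another, now-disappearing, free subspace; either way the net change to $D(w)$ is the loss of a single copy of $j$. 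This is exactly where pairwise distinctness of the $r^w_{p,q}$ is used: without it the net lost dimension can differ from $j$ — for instance for $w=[4\,3\,2\,5\,1]$ and the non-minimal left descent $s_3$, passing to $s_3w=[3\,4\,2\,5\,1]$ removes a free subspace of dimension $1$, not $3$ — and distinctness forbids precisely the coincidences of dimensions that permit this.

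I expect the main obstacle to be this last piece of bookkeeping: identifying which free subspaces appear and disappear under the elementary move of Lemma \ref{lemma:changeinrow}, and combining this with distinctness to conclude $D(\hat w)=D(w)\setminus\{j\}$ for the chosen $j$. The remainder (the induction skeleton, the reduction to peeling a single left descent, and the count from Lemma \ref{lemma:correctlength}) is routine.
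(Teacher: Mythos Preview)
Your inductive strategy is far more elaborate than needed, and the step you yourself flag as the obstacle --- that peeling off the minimal left descent $s_j$ removes precisely the value $j$ from $D(w)$ --- remains unproved in your write-up. The paper replaces all of this with a one-paragraph pigeonhole.

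The paper's argument runs as follows. If some $(p,q)$ satisfies \eqref{eq:freesubspaces} with $r^w_{p,q}=d$, then $s_d$ must occur in \emph{every} reduced word for $w$. Indeed, if it did not, then iterating Lemma \ref{lemma:changeinrow} along such a word would leave row $d$ of ${\bf r}^w$ equal to row $d$ of ${\bf r}^{\id}$, so $r^w_{d,q}=\min(d,q)=d$ (using $d=r^w_{p,q}\le q$); monotonicity down column $q$ then forces $r^w_{p-1,q}=d=r^w_{p,q}$, contradicting the second inequality in \eqref{eq:freesubspaces}. Now Lemma \ref{lemma:correctlength} says there are exactly $N=\ell(w)$ such values $d$, and by hypothesis they are pairwise distinct. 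Any reduced word for $w$ has exactly $N$ letters and must contain each of these $N$ distinct $s_d$, so its letter multiset is forced to equal $D(w)$. Hence \emph{every} reduced decomposition of $w$ is already geometrically compatible --- a stronger conclusion than the existence you aim for, obtained with no induction and no descent-peeling.

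Two remarks on your route. First, once one knows (as above) that under the distinctness hypothesis every reduced word for $w$ consists of distinct simple transpositions, Lemma \ref{lemma:dimofspaces} immediately yields $\hat j=j$ for \emph{any} left descent $j$: the alternative $\hat j<j$ would require two occurrences of $s_j$ in a reduced word for $w$. So your careful choice of the minimal descent is unnecessary and the missing step becomes trivial --- but by then the pigeonhole has already finished the job. Second, your dichotomy ``either the vertex carries dimension $j$, or it carries a dimension realised by another, now-disappearing, free subspace'' does not match what actually happens: the bookkeeping of which vertices in rows $j$ and $j{+}1$ gain or lose the conditions \eqref{eq:freesubspaces} under the move of Lemma \ref{lemma:changeinrow} is more delicate than a single swap, and you have not carried it out.
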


\begin{proof}
In order for a subspace of dimension $d$ to satisfy the conditions in \eqref{eq:freesubspaces}, the simple transposition $s_d$ must appear at least once in any reduced decomposition of $w$. If this weren't the case, by Lemma \ref{lemma:changeinrow} the $d$-th row of ${\bf r}^w$ would be equal to the $d$-th row of ${\bf r}^{\id}$, which would imply that all subspaces appearing in $R$ of dimension $d$ have to coincide with $\C^d$ - and therefore do not satisfy the conditions in \eqref{eq:freesubspaces}.
The result then follows immediately from Lemma \ref{lemma:correctlength}.
\end{proof}

\begin{remark}\label{remark:alldistinct}
A straightforward consequence of Lemma \ref{lemma:changeinrow} is that if the reduced decompositions of $w\in S_{n+1}$ consist of all distinct simple transpositions, then they are geometrically compatible. As shown in the lemma, each of these transpositions $s_{i_k}$ affects the corresponding row of the dimension vector, resulting in the subspace $R_{i_k+1,q_i}$ (which has dimension $i_k$) satisfying the conditions in \eqref{eq:freesubspaces}.
\end{remark}

We recall the following notation from Lemma \ref{lemma:changeinrow}: if we left-multiply a permutation $w$ by a simple transposition $s_j$, we denote by $q_j$ the pre-image $w^{-1}(j)$ of $j$ via $w$.

\begin{lemma}\label{lemma:dimofspaces}
Let $w\in S_{n+1}$ with reduced decomposition $w=s_{i_N} \dots s_{i_1}$, $s_j$ a simple transposition such that $\ell(s_j w)=\ell(w)+1$, and $R_{j+1,q_j}$ the subspace that satisfies the conditions in \eqref{eq:freesubspaces} if $R$ is any subrepresentation in $\Gr_{{\bf r}^{s_j w}}(M)$ (but does not satisfy them if $R$ is in $\Gr_{{\bf r}^w}(M)$). Then, the dimension of $R_{j+1,q_j}$ is $\hat{j}$ for some $\hat{j}\leq j$. In particular, $\hat{j}<j$ can only happen if all reduced decompositions of $s_j w$ are of the form $s_j w=s_j s_{i_N} \dots s_{i_k} \dots s_{i_1}$, where $i_k=j$ for some $k$ such that $i_t\neq j+1$ for all $k<t<N$.
\end{lemma}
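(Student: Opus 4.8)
The plan is to analyze directly how the dimension vector changes under left-multiplication by $s_j$ and to track the entry $r^{s_j w}_{j+1, q_j}$ combinatorially. First I would observe that, by definition, $r^{s_j w}_{j+1,q_j} = \#\{k \le q_j : (s_j w)(k) \le j+1\}$, and since $(s_j w)(q_j) = s_j(w(q_j)) = s_j(j) = j+1$, the position $q_j$ itself contributes to this count; moreover this is exactly the entry that Lemma \ref{lemma:changeinrow} (applied to $\hat w = s_j w$, but read in reverse) identifies as newly satisfying the conditions in \eqref{eq:freesubspaces}. The bound $\hat j \le j$ is immediate: a subspace $R_{j+1,q_j}$ of $M_{j+1,q_j} = \C^{j+1}$ satisfying $r^{s_j w}_{j+1,q_j} < j+1$ (the first condition in \eqref{eq:freesubspaces}) has dimension at most $j$, so $\hat j := \dim R_{j+1,q_j} = r^{s_j w}_{j+1,q_j} \le j$.

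The substance of the lemma is the characterization of when $\hat j < j$, i.e. when $r^{s_j w}_{j+1,q_j} \le j-1$. Since $r^{s_j w}_{j+1,q_j} = \#\{k \le q_j : (s_j w)(k) \le j+1\}$ and position $q_j$ contributes, $\hat j < j$ means there are at most $j-2$ positions $k < q_j$ with $w(k) \le j+1$ but $w(k) \ne j$ (note $w(q_j)=j$ forces $w(k)\ne j$ for $k<q_j$; also $s_j$ only permutes the values $j, j+1$, so $(s_j w)(k) \le j+1 \iff w(k) \le j+1$ for $k \ne q_j, q_{j+1}$, and one checks $q_{j+1} > q_j$ using $\ell(s_j w) = \ell(w)+1 \iff w^{-1}(j) < w^{-1}(j+1)$). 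So $\hat j < j$ is equivalent to: among the $j+1$ values $\{1,\dots,j+1\}$, at least two of them — other than $j$ itself — occur in $w$ at positions $> q_j$; equivalently, fewer than $j$ of the values $\{1, \dots, j+1\}$ sit weakly to the left of the value $j$ in the one-line notation of $w$. I would then translate this positional statement into the claim about reduced decompositions. The key bridge is Lemma \ref{lemma:changeinrow} together with the "converse" direction discussed in the Remark after Example \ref{ex:changeinrow}: the value $r^{s_j w}_{j+1,q_j}$ being strictly less than $j$ forces the row $j+1$... more precisely, I would argue that $\hat j < j$ forces an entry in row $\hat j + 1 = j$... let me instead argue via counting occurrences of $s_j$. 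By Lemma \ref{lemma:changeinrow} applied repeatedly along a reduced word $s_j w = s_j s_{i_N} \cdots s_{i_1}$, each left-multiplication by $s_j$ decreases a unique entry of row $j$ of the dimension vector by one; the final entry $r^{s_j w}_{j+1, q_j}$ lies in row $j+1$, not row $j$, but its smallness is governed by how the value $j$ has been "pushed right" past small values, which happens precisely through occurrences of $s_j$ in the word that are preceded (reading right to left, i.e. built up first) by an $s_j$ with no intervening $s_{j+1}$. I would make this precise by induction on $N = \ell(w)$: adding the letter $s_{i_1}$ on the right either leaves $q_j$ and the relevant count unchanged, or (if $i_1 = j$) shifts the value $j$ one position right past a smaller value, decreasing nothing of relevance, or (if $i_1 = j$ in a configuration creating a braid) interacts with a later $s_j$.

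The main obstacle, and where I would spend the most care, is the precise bookkeeping in the reduced-word characterization: showing that $\hat j < j$ happens \emph{if and only if} every reduced decomposition of $s_j w$ has the form $s_j s_{i_N} \cdots s_{i_k} \cdots s_{i_1}$ with $i_k = j$ for some $k$ and $i_t \ne j+1$ for all $k < t < N$. The forward direction requires ruling out the possibility that some reduced word avoids a "second" $s_j$ not separated from the leading $s_j$ by an $s_{j+1}$; here I would use that if all the $s_j$'s in a reduced word are separated by $s_{j+1}$'s then they can be absorbed into braid moves $s_j s_{j+1} s_j = s_{j+1} s_j s_{j+1}$, so effectively only one "net" $s_j$ acts on row $j$, which by Lemma \ref{lemma:changeinrow} and the argument of Lemma \ref{lemma:alldistinct} would force $r^{s_j w}_{j+1,q_j} = j$, contradicting $\hat j < j$. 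The reverse direction — that the stated word-form does force a drop — is the more computational half: I would exhibit, from the subword $s_j \cdots s_j$ (same letter $j$, no $j+1$ in between), an explicit inversion pattern in $s_j w$ showing the value $j+1$ (the image of $q_j$) has at least two of the values $\{1,\dots,j\}$ to its right, giving $r^{s_j w}_{j+1,q_j} \le j-1$. Throughout I would lean on Lemma \ref{lemma:changeinrow} as the single structural input and keep the rest purely combinatorial on one-line notation.
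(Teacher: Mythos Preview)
Your argument for $\hat j \le j$ is correct and essentially identical to the paper's.

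For the second statement there are two issues. First, you over-read the lemma: it asserts only that $\hat j < j$ \emph{implies} the stated form of the reduced word (``can only happen if''), not a biconditional. The ``reverse direction'' you plan to prove is not part of the claim.

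Second, and more seriously, your forward direction is not actually completed. You correctly translate $\hat j < j$ into a positional statement in one-line notation (some value in $\{1,\dots,j-1\}$ lies to the right of position $q_j$ in $w$), but the passage from this to the reduced-word condition remains a sketch. The induction on $N$ is only gestured at, and the braid-absorption argument --- that if every pair of $s_j$'s is separated by an $s_{j+1}$ then one can braid away all but one $s_j$, forcing $\hat j = j$ --- is itself a nontrivial claim you would have to prove. Worse, its conclusion does not follow: the phrase ``effectively only one net $s_j$ acts on row $j$'' conflates a property of a chosen word with a property of the permutation, whereas $\hat j = r^{s_j w}_{j+1,q_j}$ depends only on $s_j w$. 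Producing a different reduced word with fewer $s_j$'s does not by itself pin down $\hat j$; you would still need an argument linking the word structure to the specific entry $r^{s_j w}_{j+1,q_j}$, and neither Lemma~\ref{lemma:alldistinct} nor Remark~\ref{remark:alldistinct} gives you that (they concern words with \emph{all} letters distinct, not merely a single $s_j$).

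The paper avoids all of this with a short direct contradiction. After noting that $\hat j < j$ forces a second occurrence $s_{i_k}=s_j$ (via Remark~\ref{remark:alldistinct}), it supposes some $s_{j+1}$ lies between the leading $s_j$ and this $s_{i_k}$ and applies Lemma~\ref{lemma:changeinrow} once: under that assumption the final left-multiplication by $s_j$ decreases the row-$j$ entries in columns $q_j$ \emph{and} $q_j+1$, which prevents $R_{j+1,q_j}$ from satisfying the conditions in \eqref{eq:freesubspaces}, contradicting the hypothesis that it is the newly free subspace. No braid moves, no induction, no one-line-notation bookkeeping --- just Lemma~\ref{lemma:changeinrow}.
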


\begin{proof}
The first statement is almost straightforward. A subspace $R_{p,q}$ can satisfy the conditions in \eqref{eq:freesubspaces} only if $p\geq \dim(R_{p,q})+1$, and we know from Lemma \ref{lemma:changeinrow} that the only effect of $s_j$ on the corresponding dimension vector is to decrease certain entries in row $j$ by one. By Definition \ref{def:dimvecsing} of the dimension vector, all entries are bounded by their corresponding numbers of row and column (which implies that a dimension $j'$ can only appear from row $j'$ downwards), and so the dimension of $R_{j+1,q_j}$ cannot be greater than $j$.

For the second statement, we know that an index $k$ such that $i_k=j$ exists: as stated in Remark \ref{remark:alldistinct}, if the simple transpositions appearing in the reduced decomposition of $s_j w$ were all distinct, then the dimension of $R_{j+1,q_j}$ would be $j$. Then, we suppose that $s_{j+1}$ occurs between these two instances of $s_j$ and look at which entries of the dimension vector decrease when $w$ is left-multiplied by $s_j$. According to Lemma \ref{lemma:changeinrow}, the entries in columns $q_j$ and $q_j + 1$ (and possibly more) would decrease by one, meaning that the subspace $R_{j+1,q_j}$, for $R\in \Gr_{{\bf r}^{s_j w}}(M)$, cannot satisfy the conditions in \eqref{eq:freesubspaces}, which contradicts the assumption.
\end{proof}

\begin{remark}\label{remark:braidmoves}
Lemma \ref{lemma:dimofspaces} provides a characterisation of when braid moves are possible in a decomposition of $w\in S_{n+1}$ in terms of the dimensions of the subspaces $R_{p,q}$ that satisfy the conditions in \eqref{eq:freesubspaces}, for $R\in\Gr_{{\bf r}^w}(M)$. The second statement in Lemma \ref{lemma:dimofspaces} implies that if a transposition $s_i$ appears $k$ times in all reduced decompositions of $w$ (i.e. these $k$ instances of $s_i$ are not part of any braid move) then there are (at least) $k$ subspaces $R_{p,q}$ of dimension $i$ that satisfy the conditions in \eqref{eq:freesubspaces}. On the other hand, if we apply $s_j$ after $w$ and obtain a reduced decomposition of $s_j w$ that is not geometrically compatible, we know that it is possible to perform a braid move on $s_j s_{j-1} s_j$. This follows from the fact that we can move $s_j$ to the right via commutation until we find an instance of $s_{j-1}$, and similarly move the second instance of $s_j$ to the left until $s_{j-1}$ ($s_{j+1}$ cannot occur in between by Lemma \ref{lemma:dimofspaces}).

For instance, we saw in Example \ref{ex:twodecomps} a reduced decomposition for $w=[43251]$ that is not geometrically compatible: $w=s_3s_1s_2s_1s_3s_2s_4$. We obtain $w=s_1s_3s_2s_3s_1s_2s_4$ by commutation on the two occurrences of $s_3$, then perform a braid move as described above and get $w=s_1s_2s_3s_2s_1s_2s_4$. Finally, we perform a braid move on $s_2 s_1 s_2$ and obtain the geometrically compatible decomposition of $w$ shown in Example \ref{ex:twodecomps}: $w=s_1s_2s_3s_1s_2s_1s_4$.
\end{remark}

\begin{theorem}\label{them:geomdecomp}
    All permutations admit a geometrically compatible decomposition.
\end{theorem}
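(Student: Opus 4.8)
The plan is to prove the statement by a minimality argument over all reduced words of $w$ (equivalently, an induction on $\ell(w)$), relying on the lemmas above as the engine. Fix $w\in S_{n+1}$ of length $N$, and write $\mu(w)$ for the multiset $[r^w_{p,q}]$ of dimensions of the subspaces $R_{p,q}$ satisfying \eqref{eq:freesubspaces}; by Lemma \ref{lemma:correctlength} it has exactly $N$ entries, and by definition a reduced decomposition $w=s_{i_N}\cdots s_{i_1}$ is geometrically compatible exactly when $[i_N,\dots,i_1]=\mu(w)$. Given a reduced decomposition $\mathbf{d}=s_{i_N}\cdots s_{i_1}$, set $u_k=s_{i_k}\cdots s_{i_1}$, so that $\ell(u_k)=k$ and $u_N=w$. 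By Lemmas \ref{lemma:changeinrow}, \ref{lemma:correctlength} and \ref{lemma:dimofspaces}, the step $u_{k-1}\mapsto u_k=s_{i_k}u_{k-1}$ produces exactly one new subspace $R_{i_k+1,q_{i_k}}$ satisfying \eqref{eq:freesubspaces}, of some dimension $d_k$, and $d_k\le i_k$.

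The first thing I would establish is the identity $\mu(w)=[d_1,\dots,d_N]$, valid for every reduced decomposition $\mathbf{d}$. Concretely, I would show that $\mu(u_k)$ is obtained from $\mu(u_{k-1})$ by adjoining the single value $d_k$; since, by Lemma \ref{lemma:changeinrow}, left-multiplication by $s_{i_k}$ alters the dimension vector only in rows $i_k$ and $i_k+1$, this amounts to a bookkeeping of the jumps in those two rows, showing that whatever subspaces satisfying \eqref{eq:freesubspaces} are destroyed at this step are compensated in dimension by ones that are created, so that the multiset of dimensions gains exactly $d_k$. Granting this and using $d_k\le i_k$, we get $\sum_k i_k\ge\sum_{a\in\mu(w)}a$, with equality if and only if $d_k=i_k$ for all $k$, if and only if $[i_N,\dots,i_1]=\mu(w)$, if and only if $\mathbf{d}$ is geometrically compatible.

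Next I would choose a reduced decomposition $\mathbf{d}=s_{i_N}\cdots s_{i_1}$ of $w$ minimizing $\sum_k i_k$ (possible, as $w$ has finitely many reduced words) and show it is geometrically compatible. If it is not, then $d_m<i_m$ for some $m$, and Lemma \ref{lemma:dimofspaces} applies at step $m$ — with $u_{m-1}$ playing the role of $w$, $s_{i_m}$ that of $s_j$, and $s_{i_{m-1}}\cdots s_{i_1}$ as the reduced word — so its second assertion forces a further occurrence of $s_{i_m}$ among $s_{i_{m-1}},\dots,s_{i_1}$ with no $s_{i_m+1}$ strictly between it and the leading $s_{i_m}$ of $s_{i_m}\cdots s_{i_1}$. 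As in Remark \ref{remark:braidmoves}, commutations performed entirely inside the block $s_{i_m}\cdots s_{i_1}$ then create the configuration $s_{i_m}s_{i_m-1}s_{i_m}$, which the braid relation turns into $s_{i_m-1}s_{i_m}s_{i_m-1}$; the prefix $s_{i_N}\cdots s_{i_{m+1}}$ is untouched, so this yields a reduced decomposition $\mathbf{d}'$ of $w$ with $\sum(\text{indices of }\mathbf{d}')=\sum_k i_k-1$, contradicting minimality. Hence the minimizer is geometrically compatible, which proves the theorem. The same reasoning can be packaged as an induction on $\ell(w)$: take the geometrically compatible decomposition of $s_jw$ for a simple transposition $s_j$ with $\ell(s_jw)=\ell(w)-1$, prepend $s_j$, and keep performing the demoting braid move above; each application strictly decreases $\sum_k i_k$, which is bounded below by $N$, so the process stops, necessarily at a geometrically compatible decomposition.

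The step I expect to be the real obstacle is the multiset identity $\mu(w)=[d_1,\dots,d_N]$ of the second paragraph: along a reduced word the family of subspaces satisfying \eqref{eq:freesubspaces} both gains and loses members, and one must verify that, nevertheless, its multiset of dimensions increases by exactly the one prescribed value $d_k$ at each step. Once that refinement of Lemma \ref{lemma:correctlength} is in place, the rest is the short minimality/termination argument above, with the braid move itself already provided by Lemma \ref{lemma:dimofspaces} and Remark \ref{remark:braidmoves}.
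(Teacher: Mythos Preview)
Your strategy is the paper's, recast as a minimality argument on $\sum_k i_k$ rather than the double induction: both ultimately iterate the demoting braid move of Remark~\ref{remark:braidmoves}, invoking Lemma~\ref{lemma:dimofspaces} to locate it. The repackaging is clean and the termination via $\sum_k i_k$ is nicer than the paper's bookkeeping on repetitions.

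However, the claim you single out as the crux is not merely unproven --- as you have formulated it, it is false. You set $d_k=\dim R_{i_k+1,q_{i_k}}$ and assert $\mu(u_k)=\mu(u_{k-1})\cup\{d_k\}$. Take $u_{k-1}=[21435]$ (with reduced word $s_3s_1$, which is geometrically compatible since $\mu(u_{k-1})=[1,3]$) and apply $s_4$, so $u_k=[21534]$ and $q_4=u_{k-1}^{-1}(4)=3$. Then $(4,3)$ (dimension~$3$) ceases to satisfy \eqref{eq:freesubspaces}, while both $(5,3)$ (dimension~$3$) and $(5,4)$ (dimension~$4$) begin to satisfy it; hence $\mu(u_k)=[1,3,4]$ and the multiset gains the value~$4$, yet $d_k=\dim R_{5,3}=3$. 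The same example breaks your bridge to the braid move: here $d_k=3<4=i_k$, but the word $s_4s_3s_1$ contains no second $s_4$ and admits no demoting braid --- indeed it is already geometrically compatible, with $\sum_k i_k=8=\sum\mu(u_k)$. So ``$d_m<i_m$ for some $m$'' neither characterises failure of geometric compatibility nor, via Lemma~\ref{lemma:dimofspaces}, produces a usable braid. (The paper's own proof leans on the same identification of ``the'' new free subspace with $R_{j+1,q_j}$, so this is a shared soft spot, but your minimality argument makes the dependence explicit.)

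What survives is the weaker statement that $\mu(u_k)=\mu(u_{k-1})\cup\{e_k\}$ for a single value $e_k\le i_k$: your row-$i_k$/row-$(i_k{+}1)$ bookkeeping can be made to show that each loss in row $i_k$ at column $q$ is matched by a gain of equal dimension in row $i_k{+}1$ at the same $q$, leaving exactly one uncompensated gain. That still yields $\sum_k i_k\ge\sum\mu(w)$ with equality iff the word is geometrically compatible, so a minimiser is geometrically compatible. What you then need is an independent argument that any non-minimiser admits a demoting braid, without routing through $\dim R_{i_m+1,q_{i_m}}$; the paper's induction finesses this by always appending a single $s_j$ to an already geometrically compatible word and braiding only that top letter downwards, rather than locating a bad step inside an arbitrary word.
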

\begin{proof}
Let us denote by $t$ the total number of repetitions in a given reduced decomposition of a permutation (i.e. how many times any simple transposition is repeated) and by $m$ the difference between the length of $w$ and $t$.
We prove the statement by double induction on $m$ and $t$.

The base case of the induction ($m=1$ and $t=0$) and the induction step on $m$ ($m\geq 1$ and $t=0$) follow directly from Remark \ref{remark:alldistinct}: a reduced decomposition of $w$ without repetitions consists of distinct simple transpositions, and is therefore geometrically compatible. For the induction step on $t$ we show that, if a permutation with $t\geq 0$ repetitions admits a geometrically compatible decomposition, then a permutation with $t+1$ repetitions admits a geometrically compatible decomposition (for any $m\geq 1$).
Let $w=s_{i_N}\dots s_{i_1}$ with $t$ repetitions be a geometrically compatible decomposition of $w$. We denote by $d_i$ the number of subspaces $R_{p,q}$ of dimension $i$, for $R\in \Gr_{{\bf r}^w}(M)$, that satisfy the conditions in \eqref{eq:freesubspaces}. Since the fixed decomposition of $w$ is geometrically compatible, we have $\# s_i=d_i$ for all $i$. Let then $w'\coloneqq s_jw=s_j s_{i_N}\dots s_{i_1}$ such that $w'$ has $t+1$ repetitions, which means $j=i_k$ for some $k$, and such that $\ell(w')=\ell(w)+1$.
If the corresponding new free subspace appearing in $R$ has dimension $j$, then this reduced decomposition of $w'$ is already geometrically compatible. If not, then by Lemma \ref{lemma:dimofspaces} the dimension of the new free subspace must be $\hat{j}<j$, and therefore $d_{\hat{j}}$ has increased by one. As described in Remark\ref{remark:braidmoves}, we move $s_j$ via commutation and perform a braid move: $w' = s_j s_{i_N}\dots s_{i_1}=s_{i_N}\dots s_j s_{j-1} s_j \dots s_{i_1}=s_{i_N}\dots s_{j-1} s_j s_{j-1} \dots s_{i_1}$, which decreases by one $\# s_j$ (the number of occurrences of $s_j$) and increases by one $\# s_{j-1}$.
Now, if $\hat{j}=j-1$ we have again $\# s_i=d_i$ for each $i$, meaning that this reduced decomposition of $w'$ is geometrically compatible. Otherwise, we denote by $\hat{w}$ the subword of $w$ starting from the second instance of $s_{j-1}$: $\hat{w}=s_{j-1} \dots s_{i_1}$ and observe that $\hat{w}$ has $t$ repetitions. Therefore, by the induction hypothesis, $\hat{w}$ admits a geometrically compatible decomposition. We know that the current decomposition of $\hat{w}$ is not geometrically compatible, because the number of $s_{\hat{j}}$ appearing in $\hat{w}$ is $d_{\hat{j}}-1$.
The geometrically compatible decomposition of $\hat{w}$ must then be obtained by performing a sequence of braid moves until the braid $s_{\hat{j}+1} s_{\hat{j}} s_{\hat{j}+1}=s_{\hat{j}}s_{\hat{j}+1}s_{\hat{j}}$. Each braid move decreases by one the number of $s_{l+1}$ and increases by one the number of $s_l$, for $j-2\leq l\leq \hat{j}$. Since the number of all other transpositions appearing in $\hat{w}$ (and in $w'$) is not changed during this process, in the end we get $\# s_i=d_i$ for all $i$, which means that we obtained a geometrically compatible decomposition of $w'$.
\end{proof}

\begin{theorem}\label{thm:BSisom}
    Given a permutation $w\in S_{n+1}$ and a geometrically compatible decomposition $w=s_{i_N}\cdots s_1$, the Bott-Samelson resolution $\bs(s_{i_N}\cdots s_{i_1}) $ of the Schubert variety $X_w$ is isomorphic to the quiver Grassmannian $\Gr_{{\bf r}^w}(M)$.
\end{theorem}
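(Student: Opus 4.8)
The plan is to construct an explicit isomorphism $\Psi\colon\Gr_{{\bf r}^w}(M)\to\bs(s_{i_N}\cdots s_{i_1})$ together with its inverse, the bridge being the bijection --- provided by the geometrically compatible decomposition --- between the free subspaces of a subrepresentation $R$ (those $R_{p,q}$ whose dimension satisfies \eqref{eq:freesubspaces}) and the $N$ letters of the decomposition. By Lemma \ref{lemma:correctlength} there are exactly $N$ such free subspaces, and by the definition of ``geometrically compatible'' (whose existence is Theorem \ref{them:geomdecomp}) their multiset of dimensions is $\{i_1,\dots,i_N\}$; I would promote this to an actual bijection by matching, for each $d$, the occurrences of $s_d$ along the word with the free subspaces of dimension $d$ listed in increasing order of row index. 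Since the relations of $(\Gamma,I)$ force $R_{p,q}\subseteq R_{p,q+1}$ and $\iota_{p+1,p}(R_{p,q})\subseteq R_{p+1,q}$, every non-free $R_{p,q}$ is determined by the free ones (it equals $\C^p$, or $R_{p-1,q}$, or $R_{p,q-1}$, according to which inequality in \eqref{eq:freesubspaces} fails), so a point of $\Gr_{{\bf r}^w}(M)$ amounts to a choice of the $N$ free subspaces compatible with the inclusions inherited from $M$.

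With this in place, define $\Psi(R)=(V^0\bm{.},\dots,V^N\bm{.})$ by $V^0\bm{.}=F\bm{.}$ and, for $k\ge1$, let $V^k_p$ be the image in $\C^{n+1}$ (formed along the maps of $M$, which is unambiguous by the relations $I$) of the free subspace matched with the latest occurrence of $s_p$ among the first $k$ letters of the word, and $V^k_p=F_p$ if $s_p$ does not occur there. By construction $V^k\bm{.}$ and $V^{k-1}\bm{.}$ agree outside position $i_k$, and $\dim V^k_p=p$, so the only real point is that each $V^k\bm{.}$ is an honest flag, i.e. $V^k_p\subseteq V^k_{p+1}$. This reduces to the claim that the grid vertices supporting $V^k_p$ and $V^k_{p+1}$ are comparable in the partial order of $\Gamma$ --- so that the staircase maps of $M$ hand over the inclusion --- together with the analogous comparisons against the standard subspaces $F_p$. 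I expect this comparability statement to be the crux of the proof; the relevant tools are Lemma \ref{lemma:dimofspaces}, Remark \ref{remark:braidmoves}, and the braid-move analysis in the proof of Theorem \ref{them:geomdecomp}, which is precisely what governs, in a geometrically compatible decomposition, which letters --- hence which free subspaces --- can lie between two prescribed occurrences.

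For the inverse $\Theta\colon\bs(s_{i_N}\cdots s_{i_1})\to\Gr_{{\bf r}^w}(M)$, from a tuple $(V^0\bm{.},\dots,V^N\bm{.})$ I would read the free subspaces off the subspaces $V^k_{i_k}$ freshly chosen at each move --- after checking, again from the Bott--Samelson and flag conditions, that $V^k_{i_k}$ already sits inside the appropriate $\C^{p}\cong F_{p}$ --- and then fill in every non-free $R_{p,q}$ by the forced rule above (the overlapping cases being consistent, since whenever two of the defining inequalities fail at once the corresponding subspaces coincide). One then checks that the resulting tuple is a subrepresentation of $(\Gamma,I)$ of dimension vector ${\bf r}^w$, that $\Theta$ is a morphism --- being cut out by closed conditions on a product of Grassmannians and given fibrewise by linear algebra --- and that $\Psi$ and $\Theta$ are mutually inverse, the last step being the bookkeeping that reading off the free subspaces and reinserting them in the order prescribed by the decomposition is the identity.

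Finally, a useful cross-check is that processing the free subspaces in the order dictated by the decomposition exhibits $\Gr_{{\bf r}^w}(M)$ as an iterated $\mathbb{P}^1$-bundle over a point --- when a free subspace $R_{p,q}$ of dimension $d$ is introduced, the two subspaces of $M$ mapping into it have already become a single space of dimension $d-1$ and one of the two it maps into is already fixed of dimension $d+1$, so $R_{p,q}$ sweeps out a $\mathbb{P}^1$ --- and this tower matches, move by move, the standard $\mathbb{P}^1$-tower of $\bs(s_{i_N}\cdots s_{i_1})$; this re-proves the isomorphism and also gives $\dim\Gr_{{\bf r}^w}(M)=N=\ell(w)$, consistent with Corollary \ref{cor:dimension}. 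Throughout one has to keep careful track of the convention relating the word $s_{i_N}\cdots s_{i_1}$ to $w$, and of the resulting compatibility of $\Psi$ with the resolution maps onto $X_w$, namely $R\mapsto(R_{n+1,1}\subseteq\cdots\subseteq R_{n+1,n})$ on the quiver side and $(V^\bullet\bm{.})\mapsto V^N\bm{.}$ on the Bott--Samelson side.
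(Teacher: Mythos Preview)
Your approach is correct and is essentially the same idea as the paper's, only organised differently. The paper argues by induction on $\ell(w)$: it writes down an explicit formula for the bijection between letters and grid positions, namely $\varphi_w(i_k)=(i_k+1+n_k,\,i_k+m_k)$ with $n_k=\#\{j<k:i_j=i_k\}$, and then checks in one inductive step that the new free subspace $R_{p(N),q(N)}$ has dimension $i_N$ and sits between the already-identified subspaces $R_{p(b),q(b)}\subseteq R_{p(N),q(N)}\subseteq R_{p(c),q(c)}$, which is immediate from $R$ being a subrepresentation of $M$. Your version instead builds $\Psi$ and $\Theta$ globally and isolates the same crux (comparability of the relevant grid vertices so that each $V^k\bm{.}$ is a flag); your matching rule ``increasing order of row index'' is precisely the paper's $p(k)=i_k+1+n_k$, once you fix that earlier letters (smaller $k$) go to smaller rows. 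The $\mathbb{P}^1$-tower cross-check you add is not in the paper but is a pleasant and standard confirmation; what it buys is an independent verification of smoothness and dimension without appealing to Corollary~\ref{cor:dimension}, at the cost of redoing work already packaged there.
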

\begin{proof}
    Given a geometrically compatible decomposition $w=s_{i_N}\cdots s_1$, we define a map $\varphi_w$ according to the correspondence between the ordered set of indices of the transpositions appearing in $w$ and the vector space $R_{p,q}$ for any \mbox{$R\in \Gr_{{\bf r}^w}(M)$:}
    \begin{equation}\label{eq:mapphi}
    \begin{aligned}
        \varphi_w: \{ i_N,\dots,i_1 \} & \to \{ n+1 \}\times \{ n \}\\
        i_k & \mapsto (p(k),q(k))\coloneqq (i_k+1+n_k,i_k+m_k)
    \end{aligned}
    \end{equation}
    with $n_k\coloneqq \#\{j: j<k, i_j=i_k \} $  and $m_k\coloneqq \# \{j: j>k, q_{i_j}\leq q(k) < q_{i_j+1} \}$, where $q_{i_j}$ and $q_{i_j+1}$ are defined as in Lemma \ref{lemma:changeinrow}.

    We prove the statement by induction on the length of $w\in S_{n+1}$. For $w=\id$, the corresponding Bott-Samelson resolution and quiver Grassmannian coincide since they consist of a single point. We then denote $w'=s_{i_{N-1}}\cdots s_{i_1}$ and assume $\bs(s_{i_{N-1}}\cdots s_{i_1}) \cong \Gr_{{\bf r}^{w'}}(M)$, where the isomorphism is given by $\varphi_{w'}$. This means that the explicit correspondence between an element $V^{\bullet}\bm{.}\in \bs(s_{i_{N-1}}\cdots s_{i_1})$ and a subrepresentation $R'\in \Gr_{{\bf r}^{w'}}(M)$ is $V^{b}_a=R'_{p(b),q(b)}$, therefore they are defined by the same inclusion conditions. We now consider $w=s_{i_N} w'$ such that $\ell(w)=\ell(w')+1$. The image of $i_N$ via $\varphi_w$ is $(p(N),q(N))=(i_N+1+n_N,i_N+m_N)$: we need to show that the subspace $R_{p(N),q(N)}$ is isomorphic to the subspace $V^N_{i_N}$, whose defining conditions are $V^{b}_a\subseteq V^N_{i_N}\subseteq V^{c}_d$ for $b,c<N$ and $a<i_N<d$. By the induction hypothesis, the subspaces $R_{p(b),q(b)}$ and $R_{p(c),q(c)}$ realise respectively $V^{b}_a$ and $V^{c}_d$ for all such $a,b,c,d$. We observe that the dimension of $R_{p(N),q(N)}$ is $i_N$ due to the choice of a geometrically compatible decomposition of $w$. The statement then follows from the fact that $R$ is a subrepresentation of $M$, which implies $R_{p,q}\subseteq R_{p(N),q(N)}\subseteq R_{p',q'}$ for all $p\leq p(N), q\leq q(N)$, $p'\geq p(N), q'\geq q(N)$ and so in particular for $p=p(b), q=q(b)$, $p'=p(c), q'=q(c)$.
\end{proof}

\begin{corollary}
    Since the Bott-Samelson resolutions corresponding to different reduced decompositions of the same permutation are birational, they are all birational to $\Gr_{{\bf r}^w}(M)$.
\end{corollary}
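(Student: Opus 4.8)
The plan is to deduce this as a formal consequence of the results already established, with no new geometry required. The three ingredients are: every permutation admits a geometrically compatible reduced decomposition (Theorem~\ref{them:geomdecomp}); for such a decomposition the associated Bott-Samelson variety is isomorphic to $\Gr_{{\bf r}^w}(M)$ (Theorem~\ref{thm:BSisom}); and any two Bott-Samelson varieties coming from different reduced decompositions of the same permutation are isomorphic (\cite{anderson2023equivariant}, recalled in the remark following Definition~\ref{def:BS}).

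Concretely, I would argue as follows. Fix $w\in S_{n+1}$ and let $w=s_{j_N}\cdots s_{j_1}$ be an arbitrary reduced decomposition, where $N=\ell(w)$. By Theorem~\ref{them:geomdecomp} there is also a geometrically compatible reduced decomposition $w=s_{i_N}\cdots s_{i_1}$, and Theorem~\ref{thm:BSisom} supplies an isomorphism $\bs(s_{i_N}\cdots s_{i_1})\cong\Gr_{{\bf r}^w}(M)$. The decomposition-independence of Bott-Samelson varieties gives $\bs(s_{j_N}\cdots s_{j_1})\cong\bs(s_{i_N}\cdots s_{i_1})$; composing with the previous isomorphism yields $\bs(s_{j_N}\cdots s_{j_1})\cong\Gr_{{\bf r}^w}(M)$, which is the assertion.

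I do not expect any genuine obstacle here; this is a bookkeeping statement rather than a theorem with content. The one point worth making explicit is that the object on the right-hand side, $\Gr_{{\bf r}^w}(M)$, does not depend on the choice of reduced decomposition: the dimension vector ${\bf r}^w$ is defined in \eqref{def:dimvecsing} solely through the numbers $r^w_{i,j}=\#\{k\leq j:w(k)\leq i\}$, which depend only on $w$. Hence the corollary follows at once from Theorems~\ref{them:geomdecomp} and~\ref{thm:BSisom} together with the cited isomorphism of Bott-Samelson varieties, and no further computation is needed.
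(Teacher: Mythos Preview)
Your argument is correct and matches the paper's approach: the corollary is stated there without proof, as an immediate consequence of Theorems~\ref{them:geomdecomp} and~\ref{thm:BSisom} together with the decomposition-independence of Bott-Samelson varieties recalled after Definition~\ref{def:BS}. Your added observation that ${\bf r}^w$ depends only on $w$ (not on the chosen decomposition) is a useful point to make explicit.
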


\begin{example}
 Given a permutation $w'$, the map $\varphi_{w}$ defined in Equation \eqref{eq:mapphi} describes explicitly which subspace $R_{p,q}$, for $R\in \Gr_{{\bf r}^{w}}(M)$, becomes a nontrivial subspace of $\C^{i}$ when $s_i$ is applied to $w'$, with $\ell(w)=\ell(w')+1$. Consider, for instance, the geometrically compatible decomposition $w=s_1s_2s_3s_1s_2s_1s_4$ of Example \ref{ex:twodecomps}, where $s_1$ appears three times, as $s_{i_2}, s_{i_4}$ and $s_{i_7}$. The images of $i_2, i_4$ and $i_7$ via the map $\varphi_w$ defined in \eqref{eq:mapphi} are $(i_2+1+n_2,i_2+m_2)=(2,3)$, $(i_4+1+n_4,i_4+m_4)=(3,2)$ and $(i_7+1+n_7,i_7+m_7)=(4,1)$. The one-dimensional subspaces $R_{2,3}$, $R_{3,2}$ and $R_{4,1}$, which correspond respectively to the subspaces $V^2_1$, $V^4_1$ and $V^7_1$ considered in \ref{ex:BS}, can be visualised at the red vertices of $(\Gamma, I)$:

\begin{equation*}
\begin{tikzcd}[row sep=small]
\overset{\C}{\bullet} \ar[r, "\id"] \ar[d, "\iota_{2,1}"]  & \overset{\C}{\bullet} \ar[r, "\id"] \ar[d, "\iota_{2,1}"] & \overset{\C}{\bullet} \ar[r, "\id"] \ar[d, "\iota_{2,1}"] & \overset{\C}{\bullet} \ar[d, "\iota_{2,1}"] \\
\overset{\C^2}{\bullet} \ar[ur, phantom, "\scalebox{1.5}{$\circlearrowleft$}"] \ar[r, "\id"] \ar[d, "\iota_{3,2}"] & \overset{\C^2}{\bullet} \ar[ur, phantom, "\scalebox{1.5}{$\circlearrowleft$}"] \ar[r, "\id"] \ar[d, "\iota_{3,2}"]  & \overset{\C^2}{\textcolor{red}{\bullet}} \ar[ur, phantom, "\scalebox{1.5}{$\circlearrowleft$}"] \ar[r, "\id"] \ar[d, "\iota_{3,2}"] & \overset{\C^2}{\bullet} \ar[d, "\iota_{3,2}"] \\
\overset{\C^{3}}{\bullet} \ar[ur, phantom, "\scalebox{1.5}{$\circlearrowleft$}"] \ar[r, "\id"] \ar[d, "\iota_{4,3}"] & \overset{\C^{3}}{\textcolor{red}{\bullet}} \ar[ur, phantom, "\scalebox{1.5}{$\circlearrowleft$}"] \ar[r, "\id"] \ar[d, "\iota_{4,3}"] & \overset{\C^{3}}{\bullet} \ar[ur, phantom, "\scalebox{1.5}{$\circlearrowleft$}"] \ar[r, "\id"] \ar[d, "\iota_{4,3}"] & \overset{\C^{3}}{\bullet} \ar[d, "\iota_{4,3}"]\\
\overset{\C^{4}}{\textcolor{red}{\bullet}} \ar[ur, phantom, "\scalebox{1.5}{$\circlearrowleft$}"] \ar[r, "\id"] \ar[d, "\iota_{5,4}"] & \overset{\C^{4}}{\bullet} \ar[ur, phantom, "\scalebox{1.5}{$\circlearrowleft$}"] \ar[r, "\id"] \ar[d, "\iota_{5,4}"] & \overset{\C^{4}}{\bullet} \ar[ur, phantom, "\scalebox{1.5}{$\circlearrowleft$}"] \ar[r, "\id"] \ar[d, "\iota_{5,4}"] & \overset{\C^{4}}{\bullet} \ar[d, "\iota_{5,4}"]\\
\overset{\C^{5}}{\bullet} \ar[ur, phantom, "\scalebox{1.5}{$\circlearrowleft$}"] \ar[r, "\id"] & \overset{\C^{5}}{\bullet} \ar[ur, phantom, "\scalebox{1.5}{$\circlearrowleft$}"] \ar[r, "\id"] & \overset{\C^{5}}{\bullet} \ar[ur, phantom, "\scalebox{1.5}{$\circlearrowleft$}"] \ar[r, "\id"] & \overset{\C^{5}}{\bullet}\\
\end{tikzcd}
\end{equation*}
\end{example}

\section{Realisation of smooth Schubert varieties}\label{sec:smooth}

In Section \ref{sec:desing}, we recovered the Bott-Samelson resolution for Schubert varieties by defining the dimension vector ${\bf r}^w$ for the quiver $(\Gamma,I)$ as
\begin{equation*}
    r^w_{i,j} = \#\set{k\leq j : w(k)\leq i}
\end{equation*}
 for all $i,j$. In this section we give a construction for a different dimension vector for the quiver $(\Gamma,I)$, denoted by ${\bf e}^w$, and show how the corresponding quiver Grassmannian realises the Schubert variety $X_w$ if it is smooth, i.e. if $w$ is pattern-avoiding.
We recall from Section \ref{sec:schubvar} that a permutation $w\in S_{n+1}$ corresponds to a smooth Schubert variety if and only if it avoids the patterns $[4231]$ and $[3412]$, and that this is equivalent to $X_w$ being defined by non-crossing inclusions (see Definition \ref{def:inclusions}).

Consider again the quiver $(\Gamma,I)$ and its representation $M$ constructed in Section \ref{sec:quiver}, and fix a permutation $w$ in $S_{n+1}$ that avoids the patterns $[4231]$ and $[3412]$. 
For $i=1,\dots,n+1$ and $j=1,\dots,n$, we now define the dimension vector ${\bf e}^w=(e^w_{i,j})$ for the quiver $(\Gamma,I)$ as:
\begin{equation}\label{def:dimvec}
\begin{cases}
    e^w_{i,j}\coloneqq r^w_{i,j} & \text{ if } r^w_{i,j} =\min \{ i,j \} \\
    & \text{ or } r^w_{i,j} =0\\
    e^w_{i,j}\coloneqq \max \{ e^w_{i-1,j}, e^w_{i,j-1}  \} & \text{ if } 0 < r^w_{i,j} <\min \{ i,j \}
\end{cases}.
\end{equation}
Notice that the value of $r^w_{1,1}$ is either 0 or 1 (according to $w$) and falls therefore under the first case of Definition \eqref{def:dimvec}, meaning that $e^w_{1,1}$ is well-defined.

\begin{example}\label{ex:smoothvar}
We compute the conditions defining the flags $V\bm{.}$ in $X_w$ for $w=[65124837] \in S_8$ according to Definition \ref{def:schubvar}, denoting $\dim(F_p\cap V_q)$ by $d_{p,q}$:
\begin{center}
\begin{tabular}{c c c c c c c c}
     $d_{1,1}\geq 0$  & $d_{1,2}\geq 0$ & $d_{1,3}\geq 1$ & $d_{1,4}\geq 1$ & $d_{1,5}\geq 1$ & $d_{1,6}\geq 1$ & $d_{1,7}\geq 1$ & $d_{1,8}\geq 1$ \\
     
     $d_{2,1}\geq 0$  & $d_{2,2}\geq 0$ & $d_{2,3}\geq 1$ & $d_{2,4}\geq 2$ & $d_{2,5}\geq 2$ & $d_{2,6}\geq 2$ & $d_{2,7}\geq 2$ & $d_{2,8}\geq 2$ \\

     $d_{3,1}\geq 0$  & $d_{3,2}\geq 0$ & $d_{3,3}\geq 1$ & $d_{3,4}\geq 2$ & $d_{3,5}\geq 2$ & $d_{3,6}\geq 2$ & $d_{3,7}\geq 3$ & $d_{3,8}\geq 3$ \\

     $d_{4,1}\geq 0$  & $d_{4,2}\geq 0$ & $d_{4,3}\geq 1$ & $d_{4,4}\geq 2$ & $d_{4,5}\geq 3$ & $d_{4,6}\geq 3$ & $d_{4,7}\geq 4$ & $d_{4,8}\geq 4$ \\

     $d_{5,1}\geq 0$  & $d_{5,2}\geq 1$ & $d_{5,3}\geq 2$ & $d_{5,4}\geq 3$ & $d_{5,5}\geq 4$ & $d_{5,6}\geq 4$ & $d_{5,7}\geq 5$ & $d_{5,8}\geq 5$ \\

     $d_{6,1}\geq 1$  & $d_{6,2}\geq 2$ & $d_{6,3}\geq 3$ & $d_{6,4}\geq 4$ & $d_{6,5}\geq 5$ & $d_{6,6}\geq 5$ & $d_{6,7}\geq 6$ & $d_{6,8}\geq 6$ \\

     $d_{7,1}\geq 1$  & $d_{7,2}\geq 2$ & $d_{7,3}\geq 3$ & $d_{7,4}\geq 4$ & $d_{7,5}\geq 5$ & $d_{7,6}\geq 5$ & $d_{7,7}\geq 6$ & $d_{7,8}\geq 7$ \\

     $d_{8,1}\geq 1$  & $d_{8,2}\geq 2$ & $d_{8,3}\geq 3$ & $d_{8,4}\geq 4$ & $d_{8,5}\geq 5$ & $d_{8,6}\geq 6$ & $d_{8,7}\geq 7$ & $d_{8,8}\geq 8$ \\
\end{tabular}
\end{center}
Since $w$ avoids the patterns $[4231]$ and $[3412]$, $X_w$ is smooth and defined by non-crossing inclusions. These inclusions, which follow from the inequalities above, are:
\begin{equation}\label{eq:conditions}
\begin{aligned}
& V_1 \subset F_6, V_2 \subset F_6, F_1\subset V_3 \subset F_6, F_2\subset V_4\subset F_6, \\
& F_2\subset V_5\subset F_6, F_2\subset V_6, F_6\subset V_7.
\end{aligned}
\end{equation}
The corresponding dimension vector ${\bf e}^w$ obtained from \eqref{def:dimvec} is
\begin{equation*}
{\bf e}^w=\begin{pmatrix}
    0& 0& 1& 1& 1& 1& 1\\
    0& 0& 1& 2& 2& 2& 2\\
    0& 0& 1& 2& 2& 2& 3\\
    0& 0& 1& 2& 2& 2& 4\\
    0& 0& 1& 2& 2& 2& 5\\
    1& 2& 3& 4& 5& 5& 6\\
    1& 2& 3& 4& 5& 5& 6\\
    1& 2& 3& 4& 5& 6& 7\\
\end{pmatrix}.
\end{equation*}
By reading each entry $e^w_{i,j}$ as the dimension of the intersection 
$F_p\cap V_q$ and comparing ${\bf e}^w$ with the defining conditions in \eqref{eq:conditions}, we see how ${\bf e}^w$ encodes the same information on $V\bm{.}$.
\end{example}

\begin{theorem}\label{thm:isom}
If $w\in S_{n+1}$ avoids the patterns $[4231]$ and $[3412]$, the quiver Grassmannian $\Gr_{{\bf e}^w}(M)$ is isomorphic to the Schubert variety $X_w$. The isomorphism is given by
\begin{gather}\label{eq:mapsmooth}
\begin{aligned}
\psi: \Gr_{{\bf e}^w}(M) &\to X_w \\
N &\mapsto N\bm{.}
\end{aligned}
\end{gather}
where $N\bm{.}=N_{n+1,1}\subseteq N_{n+1,2}\subseteq\dots\subseteq N_{n+1,n}$.
\end{theorem}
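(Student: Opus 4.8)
The plan is to prove that $\psi$ is an isomorphism by exhibiting an explicit inverse. First, $\psi$ is a morphism: it is the restriction to $\Gr_{{\bf e}^w}(M)$ of the composition of the closed embedding $\Gr_{{\bf e}^w}(M)\hookrightarrow\prod_{(i,j)}\Gr(e^w_{i,j},M_{i,j})$ with the projection onto the factors indexed by the bottom row $(n+1,j)$, $j=1,\dots,n$; the image lands in $\Flag_{n+1}$ because all horizontal maps of $M$ are identities, and $\dim N_{n+1,j}=e^w_{n+1,j}=r^w_{n+1,j}=j$. It remains to check that $\psi(N)\in X_w$. Since every vertical map of $M$ is an inclusion, identifying $M_{i,j}=\C^i$ with $F_i\subseteq\C^{n+1}$ shows that $N_{i,j}\subseteq F_i\cap N_{n+1,j}$ for all $(i,j)$. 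If $r^w_{i,j}=i$ then $e^w_{i,j}=i$, so $N_{i,j}=M_{i,j}$ and $F_i\subseteq N_{n+1,j}$; if $r^w_{i,j}=j$ then $e^w_{i,j}=j=\dim N_{n+1,j}$, so $N_{i,j}=N_{n+1,j}$ and $N_{n+1,j}\subseteq F_i$. Hence $\psi(N)$ satisfies every inclusion $F_i\subseteq V_j$ (whenever $r^w_{i,j}=i$) and $V_j\subseteq F_i$ (whenever $r^w_{i,j}=j$); since $w$ avoids $[4231]$ and $[3412]$, these inclusions cut out $X_w$ inside $\Flag_{n+1}$ (Definition~\ref{def:inclusions} together with \cite[Theorem~1.1]{gasharov2002cohomology}), so $\psi(N)\in X_w$.

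For the inverse, given $V\bm{.}\in X_w$ I define $N=N(V\bm{.})=(N_{i,j})$ by recursion on $i+j$ mimicking the defining recursion of ${\bf e}^w$: put $N_{i,j}=0$ if $e^w_{i,j}=0$; $N_{i,j}=M_{i,j}$ if $e^w_{i,j}=i$ (so that $F_i\subseteq V_j$); $N_{i,j}=V_j$ if $e^w_{i,j}=j$ (so that $V_j\subseteq F_i=M_{i,j}$; the two prescriptions agree when $i=j$); and $N_{i,j}=N_{i-1,j}+N_{i,j-1}$ if $0<r^w_{i,j}<\min\{i,j\}$. By induction on $i+j$ one proves simultaneously that $N_{i,j}$ is well defined, that $\dim N_{i,j}=e^w_{i,j}$, and that $N_{i,j}$ is either $0$, or $F_a$ for some $a\le i$ with $r^w_{a,j}=a$, or $V_b$ for some $b\le j$ with $r^w_{i,b}=b$. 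The only nontrivial point is the recursive case, where one must show that $N_{i-1,j}$ and $N_{i,j-1}$ are nested, so that $\dim(N_{i-1,j}+N_{i,j-1})=\max\{e^w_{i-1,j},e^w_{i,j-1}\}=e^w_{i,j}$ and the sum is again of the required form. This is immediate unless $\{N_{i-1,j},N_{i,j-1}\}=\{F_s,V_t\}$ with $r^w_{s,j}=s$, $r^w_{i,t}=t$, $s\le i$, $t\le j$; in that case one invokes the following combinatorial consequence of pattern-avoidance (i.e.\ of Gasharov's non-crossing condition): if $r^w_{s,j}=s$ and $r^w_{i,t}=t$ with $s\le i$ and $t\le j$, then $r^w_{s,t}=\min\{s,t\}$, so that $F_s$ and $V_t$ are comparable on every flag in $X_w$.

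Granting this, $N(V\bm{.})$ lies in $\Gr_{{\bf e}^w}(M)$: the subrepresentation conditions $N_{i,j}\subseteq N_{i+1,j}$ and $N_{i,j}\subseteq N_{i,j+1}$ hold because in the recursive case $N_{i,j}$ occurs as a summand of $N_{i+1,j}$ and of $N_{i,j+1}$, while in the remaining cases they follow from the $F$/$V$ description together with $r^w_{i,j}=i$, resp.\ $r^w_{i,j}=j$. Moreover $\psi(N(V\bm{.}))=V\bm{.}$ since $N_{n+1,j}=V_j$ for all $j$. Conversely, any $N'\in\Gr_{{\bf e}^w}(M)$ satisfies the same recursion: the boundary cases are forced by $\dim N'_{i,j}=e^w_{i,j}$ and by the inclusions established in the first paragraph (applied to $\psi(N')\in X_w$), and in the recursive case $N'_{i-1,j}+N'_{i,j-1}\subseteq N'_{i,j}$ with coinciding dimensions. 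Hence $N'=N(\psi(N'))$, so $\psi$ is bijective with inverse $V\bm{.}\mapsto N(V\bm{.})$, and this inverse is a morphism: each $N_{i,j}$, as a function of $V\bm{.}\in X_w$, is either constant, a coordinate of the flag (valued in the sub-Grassmannian of subspaces of $F_i$), or the sum of two subspace-valued morphisms whose sum has constant dimension $e^w_{i,j}$, which on that locus is regular. Therefore $\psi$ is an isomorphism. Alternatively, once $\psi$ is known to be a bijective morphism, one may invoke Zariski's main theorem, since $X_w$ is smooth and hence normal, to conclude directly.

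The main obstacle is the nesting step in the recursive case, i.e.\ the combinatorial statement that $r^w_{s,j}=s$ and $r^w_{i,t}=t$ (with $s\le i$, $t\le j$) force $r^w_{s,t}=\min\{s,t\}$. This is exactly where the hypothesis that $w$ avoids $[4231]$ and $[3412]$ enters — without it the subspaces $N_{i-1,j}$ and $N_{i,j-1}$ can be in general position and the recursion breaks down — and carrying it out cleanly requires identifying which essential inclusion of $X_w$ witnesses each of $N_{i-1,j}$ and $N_{i,j-1}$ and checking that the corresponding pair is non-crossing. Everything else (the subrepresentation checks and the regularity of the inverse) is routine once this is in place.
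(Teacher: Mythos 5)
Your strategy is essentially the paper's: translate the first line of \eqref{def:dimvec} into the non-crossing inclusions cutting out $X_w$, and recover the remaining $N_{i,j}$ by a recursion mirroring the second line. You have also correctly isolated the one non-routine point, namely that at a recursive vertex $(i,j)$ the subspaces $N_{i-1,j}$ and $N_{i,j-1}$ must be nested (the paper's own proof passes over this with the single sentence that $N_{i,j}$ ``is set to either $N_{i-1,j}$ or $N_{i,j-1}$''). The problem is that the combinatorial lemma you propose to invoke there is false as stated, and you do not prove any corrected version of it. Take $w=[2134]\in S_4$, which has length one and hence avoids both patterns, and $s=t=1$, $i=j=2$: then $r^w_{1,2}=1=s$ and $r^w_{2,1}=1=t$ with $s\le i$ and $t\le j$, yet $r^w_{1,1}=0<\min\{s,t\}$. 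Adding the hypothesis that $(i,j)$ is a recursive vertex does not repair the statement: for $w=[2143]$, $s=t=1$, $i=j=3$ one has $r^w_{1,3}=1$, $r^w_{3,1}=1$ and $0<r^w_{3,3}=2<\min\{3,3\}$, but again $r^w_{1,1}=0$.

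In both examples the recursion never actually pairs $F_1$ with $V_1$: at the relevant vertex it pairs $F_2$ with $V_2$, which the conditions force to be equal. That is exactly the point — the nesting statement you need is not a property of arbitrary $s,t$ with $r^w_{s,j}=s$ and $r^w_{i,t}=t$, but of the specific values $s=e^w_{i-1,j}$ and $t=e^w_{i,j-1}$ produced by your induction, together with the provenance of $F_s$ and $V_t$ from particular defining (essential) inclusions $F_s\subseteq V_{j_0}$ ($j_0\le j$) and $V_t\subseteq F_{i_0}$ ($i_0\le i$) and the maximality of $s$ and $t$ among the dimensions reaching $(i-1,j)$ and $(i,j-1)$. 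A naive pattern-hunt with generic witnesses does not close this: from $r^w_{s,j}=s$, $r^w_{i,t}=t$, $0<r^w_{i,j}<\min\{i,j\}$ and $r^w_{s,t}<\min\{s,t\}$ one can extract four positions whose values realise $[3412]$ in only one of the possible interleavings, the others giving harmless patterns such as $[2413]$ or $[2143]$. So the lemma must be reformulated with the full inductive data and then proved — either by a more careful choice of witnesses exploiting that $s$ and $t$ are the \emph{maximal} dimensions propagated to the two neighbours, or by arguing directly with Gasharov's non-crossing essential set. As it stands, this is a genuine gap: the step you flag as ``the main obstacle'' is indeed the whole content of the theorem beyond bookkeeping, it is left unproved, and the version of it you write down is refuted by length-one and length-two permutations.
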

\begin{proof}
By the definition of $M$ and ${\bf e}^w$, we have
\begin{equation*}
N_{n+1,j}\subseteq N_{n+1,j+1},\; \dim(N_{n+1,j})=j  
\end{equation*}
for all $j$, implying $N\bm{.}\in \Flag_{n+1}$.
Since $w$ avoids the patterns $[4231]$ and $[3412]$, all flags $V\bm{.}=V_1\subseteq V_2\subseteq \dots \subseteq V_n$ in $X_w$ are defined by conditions of the following form: for each $q\in \{1,\dots,n\}$, $V_q$ is defined by $F_{p'_q}\subseteq V_q\subseteq F_{p_q}$ for some $p_q, p'_q$.
These conditions are equivalent, respectively, to $\dim(F_{p'_q} \cap V_q)= \min(p'_q,q)=p'_q $ and $\dim(F_{p_q} \cap V_q)= \min(p_q,q)=q$.
The definition of the dimension vector ${\bf e}^w$ (in the first line of \eqref{def:dimvec}) imposes on $N\bm{.}$ exactly these conditions, meaning that $F_{p'_q} \subseteq N_{n+1,q} \subseteq F_{p_q}$ for all $q$ and the corresponding $p'_q, p_q$.
The statement follows from the fact that, whenever the condition $\dim(F_i\cap V_j)\geq \#\set{k\leq j : w(k)\leq i}$ is not defining for $V\bm{.}$ (i.e. it is redundant), the corresponding subspace $N_{i,j}$ in $N\bm{.}$ is set to either $N_{i-1,j}$ or $N_{i,j-1}$ (second line of \eqref{def:dimvec}).
\end{proof}

\begin{remark}
 The flag variety $\Flag_{n+1}$ can be defined equivalently as the quotient $G/B$, where $G=\GL_{n+1}$ and $B\subset G$ is the Borel subgroup of upper-triangular matrices (a construction explained, for instance, in \cite{brion2005lectures}[Section 1.2]). Let $T$ be the torus subgroup of $B$ consisting of diagonal matrices, then the left multiplication by $T$ on $G$ induces a $T$-action on $G/B$.
 In this setting, the Schubert varieties in $G/B$ are realised as the Zariski closures of the orbits in $G/B$ under the action of $B$, and they are invariant under the $T$-action. From this fact and from Theorem \ref{thm:isom}, we get an action of $T$ on the quiver Grassmannian $\Gr_{{\bf e}^w}(M)$ induced by the action of $T$ on $M$, which is in turn induced by the left multiplication of $T$ on the elements of the chosen basis $\mathcal{B}=\set{b_1, b_2,\dots, b_{n+1}}$ of $\C^{n+1}$ 
 (it is also straightforward to check that, if $N$ is an element of $\Gr_{{\bf e}^w}(M)$, then $T\cdot N$ is still in $\Gr_{{\bf e}^w}(M)$). Furthermore, following from its definition in \eqref{eq:mapsmooth}, the isomorphism $\psi: \Gr_{{\bf e}^w}(M) \to X_w$ is $T$-equivariant, i.e. $\varphi(t \cdot N)=t \cdot \varphi(N)$ for all $t\in T$ and $N \in \Gr_{{\bf e}^w}(M)$.
 
\end{remark}

\bibliography{bibliography.bib}

\newcommand{\etalchar}[1]{$^{#1}$}
\begin{thebibliography}{CIFF{\etalchar{+}}20}

\bibitem[AF23]{anderson2023equivariant}
David Anderson and William Fulton.
\newblock {\em {Equivariant cohomology in algebraic geometry}}.
\newblock Cambridge University Press, 2023.

\bibitem[Aus55]{auslander1955dimension}
Maurice Auslander.
\newblock {On the Dimension of Modules and Algebras (III): Global Dimension1}.
\newblock {\em Nagoya Mathematical Journal}, 9:67--77, 1955.

\bibitem[Bon83]{bongartz1983algebras}
Klaus Bongartz.
\newblock {Algebras and quadratic forms}.
\newblock {\em Journal of the London Mathematical Society}, 2(3):461--469,
  1983.

\bibitem[Bri05]{brion2005lectures}
Michel Brion.
\newblock {Lectures on the geometry of flag varieties}.
\newblock In {\em Topics in Cohomological Studies of Algebraic Varieties:
  Impanga Lecture Notes}, pages 33--85. Springer, 2005.

\bibitem[CB89]{crawle-boevey1989quiver}
William Crawley-Boevey.
\newblock Maps between representations of zero-relation algebras.
\newblock {\em Journal of Algebra}, 126(2):259--263, 1989.

\bibitem[CB92]{crawley1992lectures}
William Crawley-Boevey.
\newblock {Lectures on representations of quivers}, 1992.

\bibitem[CC06]{caldero2006cluster}
Philippe Caldero and Fr{\'e}d{\'e}ric Chapoton.
\newblock {Cluster algebras as Hall algebras of quiver representations}.
\newblock {\em Commentarii Mathematici Helvetici}, 81(3):595--616, 2006.

\bibitem[CIFF{\etalchar{+}}17]{cerulli2017linear}
Giovanni Cerulli~Irelli, Xin Fang, Evgeny Feigin, Ghislain Fourier, and Markus
  Reineke.
\newblock {Linear degenerations of flag varieties}.
\newblock {\em Mathematische Zeitschrift}, 287:615--654, 2017.

\bibitem[CIFF{\etalchar{+}}20]{fourier2020lineardegenerations}
Giovanni Cerulli~Irelli, Xin Fang, Evgeny Feigin, Ghislain Fourier, and Markus
  Reineke.
\newblock Linear degenerations of flag varieties: partial flags, defining
  equations, and group actions.
\newblock {\em Mathematische Zeitschrift}, 296(1-2):453--477, 2020.

\bibitem[CIFR12]{cerulliirelli2012quiveranddegenerate}
Giovanni Cerulli~Irelli, Evgeny Feigin, and Markus Reineke.
\newblock {Quiver Grassmannians and degenerate flag varieties}.
\newblock {\em Algebra Number Theory}, 6(1):165--194, 2012.

\bibitem[CIFR17]{cerulli2017schubert}
Giovanni Cerulli~Irelli, Evgeny Feigin, and Markus Reineke.
\newblock {Schubert quiver Grassmannians}.
\newblock {\em Algebras and Representation Theory}, 20:147--161, 2017.

\bibitem[Dem74]{demazure1974desingularisation}
Michel Demazure.
\newblock {D{\'e}singularisation des vari{\'e}t{\'e}s de Schubert
  g{\'e}n{\'e}ralis{\'e}es}.
\newblock In {\em Annales scientifiques de l'{\'E}cole Normale Sup{\'e}rieure},
  volume~7, pages 53--88, 1974.

\bibitem[DW02]{derksen2002semi}
Harm Derksen and Jerzy Weyman.
\newblock {Semi-invariants for quivers with relations}.
\newblock {\em Journal of Algebra}, 258(1):216--227, 2002.

\bibitem[Fei12]{feigin2010grassmanndegenerations}
Evgeny Feigin.
\newblock {$\mathbb{G}_a^M$} degeneration of flag varieties.
\newblock {\em Selecta Math. (N.S.)}, 18(3):513--537, 2012.

\bibitem[FF13]{feigin2013frobeniussplittin}
Evgeny Feigin and Michael Finkelberg.
\newblock {Degenerate flag varieties of type A: Frobenius splitting and BW
  theorem}.
\newblock {\em Mathematische Zeitschrift}, 275(1-2):55--77, 2013.

\bibitem[Ful92]{fulton1992flags}
William Fulton.
\newblock {Flags, Schubert polynomials, degeneracy loci, and determinantal
  formulas}.
\newblock 1992.

\bibitem[Ful97]{fulton1997young}
William Fulton.
\newblock {\em {Young tableaux: with applications to representation theory and
  geometry}}.
\newblock Number~35. Cambridge University Press, 1997.

\bibitem[GR02]{gasharov2002cohomology}
Veselin Gasharov and Victor Reiner.
\newblock {Cohomology of smooth Schubert varieties in partial flag manifolds}.
\newblock {\em Journal of the London Mathematical Society}, 66(3):550--562,
  2002.

\bibitem[HMP20]{hudson2020stability}
Thomas Hudson, Tomoo Matsumura, and Nicolas Perrin.
\newblock {Stability of Bott--Samelson Classes in Algebraic Cobordism}.
\newblock In {\em Schubert Calculus and Its Applications in Combinatorics and
  Representation Theory: Guangzhou, China, November 2017}, pages 281--306.
  Springer, 2020.

\bibitem[IEFR21]{irelli2021cell}
Giovanni~Cerulli Irelli, Francesco Esposito, Hans Franzen, and Markus Reineke.
\newblock {Cell decompositions and algebraicity of cohomology for quiver
  Grassmannians}.
\newblock {\em Advances in Mathematics}, 379:107544, 2021.

\bibitem[IFR13]{irelli2013desingularization}
G~Cerulli Irelli, Evgeny Feigin, and Markus Reineke.
\newblock {Desingularization of quiver Grassmannians for Dynkin quivers}.
\newblock {\em Advances in Mathematics}, 245:182--207, 2013.

\bibitem[IL14]{irelli2014degenerate}
Giovanni~Cerulli Irelli and Martina Lanini.
\newblock {Degenerate Flag Varieties of Type A and C are Schubert Varieties}.
\newblock {\em International Mathematics Research Notices},
  2015(15):6353--6374, 08 2014.

\bibitem[LS90]{lakshmibai1990criterion}
Venkatramani Lakshmibai and Bhaskhar Sandhya.
\newblock {Criterion for smoothness of Schubert varieties in SL (n)/B}.
\newblock In {\em Proceedings of the Indian Academy of Sciences-Mathematical
  Sciences}, volume 100, pages 45--52. Springer, 1990.

\bibitem[Mak25]{maksimau2019flag}
Ruslan Maksimau.
\newblock Flag versions of quiver grassmannians for dynkin quivers have no odd
  cohomology.
\newblock {\em Advances in Mathematics}, 461:110078, 2025.

\bibitem[Rei13]{reineke2013projectiveisquiver}
Markus Reineke.
\newblock {Every projective variety is a quiver Grassmannian}.
\newblock {\em Algebras and Representation Theory}, 16(5):1313--1314, 2013.

\bibitem[Rin18]{ringel2018quiver}
Claus~Michael Ringel.
\newblock {Quiver Grassmannians for wild acyclic quivers}.
\newblock {\em Proceedings of the American Mathematical Society},
  146(5):1873--1877, 2018.

\bibitem[Sch92]{schofield1992quiver}
Aidan Schofield.
\newblock General representations of quivers.
\newblock {\em Proceedings of the London Mathematical Society. Third Series},
  65(1):46--64, 1992.

\bibitem[Sch14]{schiffler2014quiver}
Ralf Schiffler.
\newblock {\em {Quiver representations}}, volume~1.
\newblock Springer, 2014.

\end{thebibliography}
\bibliographystyle{alpha}
\end{document}